\newtheorem{theorem}{Theorem}[section]
\newtheorem{lemma}[theorem]{Lemma}
\newtheorem{corollary}[theorem]{Corollary}
\newtheorem{proposition}[theorem]{Proposition}
\newtheorem{remark}[theorem]{Remark}
\def\beq{\begin{equation}\displaystyle}
\def\eeq{\end{equation}}
\def\bel{\begin{equation} \displaystyle \begin{array}{l} }
\def\eel{\end{array} \end{equation} }
\def\bell{\begin{equation} \displaystyle \begin{array}{ll}  }
\def\eell{\end{array} \end{equation} }
\def\bea{\begin{eqnarray}}
\def\eea{\end{eqnarray} }
\def\bean{\begin{eqnarray*}}
\def\eean{\end{eqnarray*} }
\newenvironment{proof}{\noindent{\bf Proof.~}}
{{\mbox{}\hfill {\small \fbox{}}\\}}
\def\NN{\mathbb{N}}
\def\RR{\mathbb{R}}
\def\bs{\bigskip}
\def\eps{\varepsilon}
\def\bar#1{{\overline #1}}
\def\tOm{\tilde{\Omega}_{\lambda^*}}
\definecolor{green}{rgb}{0,0.7,0}
\newcommand{\lrp}[1]{\left( #1 \right)}
\newcommand{\lrb}[1]{\left[ #1 \right]}
\newcommand{\lrk}[1]{\left\{ #1 \right\}}
\newcommand\sbullet[1][.7]{\mathbin{\vcenter{\hbox{\scalebox{#1}{$\bullet$}}}}}
\definecolor{GwenCorr}{rgb}{0.6,0,1}
\begin{document}

\title{Optimal strategies for \textit{Wolbachia} mosquito replacement technique: influence of the carrying capacity on spatial releases}

\author{Luis Almeida\footnote{Sorbonne Universit\'e, CNRS, Universit\'e de Paris, Inria, Laboratoire J.-L. Lions, 75005 Paris, France ({\tt luis.almeida@cnrs.fr}).}
	\and Jes\'us Bellver-Arnau\footnote{Sorbonne Universit\'e, CNRS, Universit\'e de Paris, Inria, Laboratoire J.-L. Lions, 75005 Paris, France ({Current address: Centre d’Estudis Avan\c{c}ats de Blanes (CEAB-CSIC), Carrer d’Accés a la cala Sant Francesc 14, 17300 Blanes, Spain, \tt jesus.bellver@ceab.csic.es}).}
	\and Gwenaël Peltier \footnote{Sorbonne Universit\'e, CNRS, Universit\'e de Paris, Inria, Laboratoire J.-L. Lions, 75005 Paris, France ({Current email: \tt gwenael.peltier1@gmail.com}).}
	\and N. Vauchelet\thanks{Université Sorbonne Paris Nord, Laboratoire Analyse, Géométrie et Applications, LAGA, CNRS UMR 7539, F-93430, Villetaneuse, France. Email: \texttt{vauchelet@math.univ-paris13.fr}.}
}

\date{}

\maketitle

\begin{abstract}
	This work is devoted to the mathematical study of an optimization problem regarding control strategies of mosquito population in a heterogeneous environment. Mosquitoes are well-known vectors of diseases. For some diseases, such as dengue, it has been found that mosquitoes have a reduced vector capacity when carrying the endosymbiotic bacterium \textit{Wolbachia}. We consider a mathematical model of a replacement technique consisting in rearing and releasing \textit{Wolbachia}-infected mosquitoes to replace the wild population. Our goal is to optimize the release protocol to maximize replacement effectiveness in a spatially inhomogeneous environment. Using a scalar model with space-dependent carrying capacity, we explore the existence and properties of an optimal release profile maximizing the replacement across the domain. In particular, neglecting mosquito mobility and under some assumptions on the biological parameters, we characterize the optimal releasing strategy for a short time horizon, and we reduce the case of a long time horizon to a one-dimensional optimization problem. Our theoretical results are illustrated with several numerical simulations.
\end{abstract}

\bs

\textbf{Mathematics Subject Classifications:} 49K15; 49M05; 92D25

{\bf Keywords: } Optimal control, spatial heterogeneity, population replacement, \textit{Wolbachia} bacterium, vector control.


\bs

\section{Introduction}\label{sec:Intro}

	Mosquitoes of the genus \textit{Aedes} are responsible for the transmission of many diseases to humans. Among them, one may cite dengue, Zika, and chikungunya. One promising way to control the spread of such diseases is the use of the bacterium \textit{Wolbachia} \cite{Bourtzis}. Indeed, it has been reported that \textit{Wolbachia}-infected mosquitoes are less capable of transmitting such diseases \cite{Wal.wMel, Ant2018}.
	Moreover, this bacterium is characterized by, on the one hand, a vertical transmission from mother to offspring and, on the other hand, producing a \textit{cytoplasmic incompatibility}, which makes the mating of \textit{Wolbachia}-infected males with wild females to be less fertile \cite{Sinkins}.
	Taking advantage of these features, a population replacement strategy has been developed by rearing and releasing \textit{Wolbachia}-infected mosquitoes \cite{Kamtchum}. This strategy is becoming very popular since it is potentially self-sustaining and has been successfully deployed in the field by the World Mosquito Program (see \cite{Hoffmannetal, Deployment_Townsville} and \cite{WMP}).
	
	It has motivated several works including mathematical studies that have proved to be of great interest by proposing models that can be used to test and optimize different scenarios, see e.g. \cite{Li2018,Qu2018,ABPR} and the review article \cite{review}.
	In particular, mathematical control theory has been used to study the feasibility of the strategy, see e.g. \cite{2023agbobidiGlobal, Bliman, 2023Rossi}.
	A natural question is the optimization of the release protocol. How to design the best release protocol, taking into account production constraints, to have the most efficient population replacement?
	Several mathematical works have addressed this issue. First, the optimization of the temporal distribution of the releases, neglecting the spatial dependency, has been studied in e.g. \cite{colombien,APSV,MBE,ABP}. In particular, using a reduced model, the authors in \cite{APSV} show that the best strategy is close to a bang-bang strategy where the maximum number of mosquitoes available is released at one time at the beginning or end (depending of the number of  mosquitoes available) of the release period. 
	Then, the study of the optimization in space has been initiated in \cite{CEMRACS,DHP,Nadin_Toledo,Mazari_Nadin_Toledo} assuming that the space is homogeneous. In these works, the release is assumed to be done at initial time only and the question is to know how to spatially distribute the population to release in order to optimize the efficiency.
	
	However, it is clear that for practical applications, environment cannot be considered as homogeneous and heterogeneity may have a strong influence on the dynamics of the replacement (see e.g. \cite{NV_Blocking}). The aim of this paper is to propose a first mathematical study of an optimization problem for a population replacement in a non-homogeneous landscape. More precisely, we consider a two-population model inspired by the one studied in \cite{APSV}, see also \cite{Farkas,Fenton,Hughes,Colombien2}, where similar systems are considered. The key difference with \cite{APSV}, is that we assume that the carrying capacity of the environment is heterogeneous in space, modeling the fact that some areas are more favorable than others for the mosquito population.
	The system is first reduced in the spirit of \cite{SV_Reduction}, and the release is assumed to be point-wise in time: Indeed, it has been observed in \cite{APSV,ABPR} that this is often the best strategy.
	Moreover, as a first step towards understanding the effects of spatial heterogeneity in \textit{Wolbachia}-infected mosquito releases, we neglect the active motion of mosquitoes, usually modeled by a diffusion term. We comment in Section \ref{sec:Discussion} on the realism of these assumptions and the utility of the simplified model. To fix the notation, let us present briefly the resulting optimization problem considered in this article. Denote $p(t,x)$ the fraction of \textit{Wolbachia}-infected population at time $t>0$ and position $x\in \Omega$, with $\Omega$ being a bounded domain of $\RR$ or $\RR^2$.
	
	Let $u_0$ be a function of $x\in\Omega$, modeling a point-wise release of mosquitoes at initial time. We consider the control function $u_0$ to be positive, bounded by a constant $M$ and such that its integral (which corresponds to the total quantity of mosquitoes released) is bounded by $C$.

	 We seek to minimize the distance, at some given final time $T>0$, between the proportion $p$ and the total invasion constant steady state $1$. Using the $L^2$-norm, the problem reads
	\begin{equation}\label{prob:u0}\tag{$\mathcal{P}_{u_0}$}
		\boxed{
			\inf_{u_0\in\mathcal{U}_{0,C,M}} \int_\Omega K(x)^2 (1-p(T,x))^2\,dx,
		}
	\end{equation}
	with the space of admissible controls being
	\begin{equation}\label{def:U0CM}
		\mathcal{U}_{0,C,M} = \lrk{ u_0\in L^{\infty}(\Omega), \quad 0 \leq u_0\leq M \text{ a.e.}, \quad \int_\Omega u_0(x)\,dx \leq C}.
	\end{equation}

    In this problem, $K$ is the given carrying capacity depending on the space variable $x\in\Omega$. The relation between this release function, $u_0=u_0(x)$, and the proportion $p$ is obtained by solving a differential equation that will be specified later. Solving this optimal control problem boils down to investigating the question: what is the best initial distribution of the release in a heterogeneous environment in order to optimize the population replacement when active motion of mosquitoes is neglected?
	
	Up to our knowledge, this work is the first one considering spatial heterogeneity in such an optimization problem. We mention that neglecting active motion of mosquitoes allows us to reduce the problem to a dynamical system instead of considering a reaction-diffusion system, which greatly simplifies the study, and allows to derive precise results on the optimum. Although the study is simplified, it is nonetheless very challenging to get a precise description of the optimal solution. This work is a first step towards more sophisticated and more realistic future works where the active motion of mosquitoes will be considered.
	
	The outline of the paper is the following. In the next section, we present the modeling assumptions and introduce the optimization problem we are considering.
	Then, section \ref{sec:analysis} is devoted to the theoretical analysis of the optimization problem and to the proof of our main results. In section \ref{sec:numeric}, we illustrate optimal solutions by implementing an ad hoc numerical algorithm which exploits these results. Finally, some technical computations are provided in an appendix.

\section{Derivation of a mathematical model}

\subsection{The starting point: a two populations model with diffusion}
For the sake of completeness, we start by introducing a model based on two populations of mosquitoes, with and without \textit{Wolbachia}, which is the starting point to derive our model \eqref{eq:psimpl}. Let $u$ denote a function accounting for the rate at which \textit{Wolbachia}-infected mosquitoes are released in a given bounded domain, $\Omega$, of $\RR$ or $\RR^2$. To model the dynamics of the replacement of a wild population of mosquitoes by \textit{Wolbachia}-infected ones, a version of the following system has been considered in e.g. \cite{APSV}, although their model did not take space into account. In this system, the density of the \textit{Wolbachia}-infected species is denoted $n_2$, whereas the density of the wild population to be replaced is denoted $n_1$. The system reads
\begin{equation}\label{sys:2species}
\begin{cases}
& \partial_t n_1 - D \Delta n_1 =b_1n_1\left(1-\displaystyle\frac{n_1+n_2}{K(x)}\right)\left(1-s_h\displaystyle\frac{n_2}{n_1+n_2}\right)-d_1n_1,\\
& \partial_t n_2 - D \Delta n_2 =b_2n_2\left(1-\displaystyle\frac{n_1+n_2}{K(x)}\right)-d_2n_2+u, \quad t\in[0,T], \quad x\in\Omega,  \\
&\partial_\nu n_1 = 0, \qquad \partial_\nu n_2 = 0, \quad \text{ on } \partial \Omega.
\end{cases}
\end{equation}
In this system, $b_i$, $i=1,2$, and $d_i$, $i=1,2$ represent the intrinsic birth and death rates of population $i$ (i.e. not considering the term representing the population limitation due to the carrying capacity). In many insect species, among which we find mosquitoes, the birth rate is considerably higher than the death rate, therefore for biological reasons we will consider the constraint $d_i\leq b_i$. Also, we assume that the second population has a fitness disadvantage with respect to the first one, due to the infection with \textit{Wolbachia}. This translates into imposing
\begin{equation}\label{fitness}
	b_2\leq b_1 \text{ and } d_1\leq d_2.
\end{equation}
The parameter $s_h$ measures the \textit{Wolbachia}-induced cytoplasmic incompatibility (CI) of the second population with respect to the first one. We have that $0\leq s_h \leq 1$, when $s_h=1$ there is a perfect CI, when $s_h=0$ there is none. We assume that, at $t=0$ there are no \textit{Wolbachia}-infected individuals and that the wild population is at equilibrium, that is $n_2(0,x)=0$ and $n_1(0,x)=K(x)\lrp{1-\frac{d_1}{b_1}}$. The diffusion coefficient is denoted $D$ and the carrying capacity $K$.
In \eqref{sys:2species}, $T>0$ is the time horizon of the problem. In this work we consider that the carrying capacity $K$ has Lipschitz spatial dependency:
$$
0<K(x) \in W^{1,\infty}(\Omega).
$$

The goal we pursue is to find an optimal release function, $u$, such that at a given final time $T$ the solution $(n_1,n_2)$ to \eqref{sys:2species} is as close as possible of the $n_2$-invasion steady state denoted $\lrp{0,n_2^*}=\lrp{0,K(x)\lrp{1-\frac{d_2}{b_2}}}$. Choosing a least square distance, this leads us to introduce the following cost functional
\begin{equation}\label{Ju}
J(u) = \frac 12 \int_\Omega \left( n_1(T,x)^2 + \lrb{(n_2^*-n_2(T,x))_+}^2\right)\,dx,
\end{equation}
where $(\cdot)_+$ stands for the positive part function, so that if $n_2(T,x)>n_2^*$ we have no penalty. We consider that both the number of available individuals to release and the rate at which these individuals can be released, are bounded from above.
The set of admissible controls is therefore given by
\begin{equation}\label{def.UTCM}
\mathcal{U}_{T,C,M} = \lrk{u\in L^{\infty}([0,T]\times\Omega), \quad 0\leq u\leq M \text{ a.e. }, \int_0^T\int_\Omega u(t,x)\,dtdx \leq C}.
\end{equation}

This is inspired by the problem considered in \cite{APSV}: we assume that there is a maximum release rate $M$ at each point in space and time and that there is a limited number of mosquitoes $C$ that can be used during the whole intervention (up to time $T$). Hence, we can state the following optimal control problem for System \eqref{sys:2species}:
\begin{equation}\label{prob:full}\tag{$\mathcal{P}_{2,\text{diff}}$}
	\inf_{u\in \mathcal{U}_{T,C,M}} J(u), \ \text{ where } \begin{cases}
		n_1 \text{ and } n_2 \text{ solve } \eqref{sys:2species}\\
		J \text{ is given by \eqref{Ju}} \\
		\mathcal{U}_{T,C,M} \text{ is given by \eqref{def.UTCM}}
	\end{cases} .
\end{equation}

This is a simplified setting that is used in this work but that will be extended to more natural settings in future works. For instance, it is natural to consider a constraint on the initial number of mosquitoes available and a limited time flux of the mosquitoes which would correspond to the situation where the totality of the mosquitoes are not immediately available but will be progressively produced by a mosquito production facility. In that case, for extra realism, it is also possible to consider the fact that mosquitoes that are stocked continue to get older and that this should be taken into account in their life expectancy once they are released.

\subsection{Model simplification}\label{sec:simplified_problem}

The problem as stated in \eqref{prob:full} is a very challenging one, therefore, we make three simplifying assumptions to make the problem more treatable. 
	
\begin{paragraph}{High fertility.}
The arguments found in \cite{SV_Reduction} and \cite[Proposition 2.2]{APSV} can be adapted to prove that when fecundity rates are large, that is, if we assume that $b_1 = \frac{b_1^0}{\eps}$ and $b_2 = \frac{b_2^0}{\eps}$ and we let $\eps\to 0$, system \eqref{sys:2species} may be reduced to one single equation on the proportion of \textit{Wolbachia}-infected mosquitoes in the population. The addition of an inhomogeneous carrying capacity does not alter the main arguments of the proof of this reduction and we may obtain after straightforward computations that $p=p(t,x)$ solves the following scalar equation:
\begin{equation}\label{eq:p}
	\left\{
	\begin{array}{l}
		\displaystyle \partial_t p - D \Delta p - 2 D \frac{\nabla p\cdot \nabla K(x)}{K(x)} = f(p)+ \frac{u(t,x)}{K(x)} g(p) - D \frac{\Delta K(x)}{K(x)} \psi(p), \quad t>0, \quad x\in\Omega \\
		p(0,x) = 0, \quad x\in\Omega,\\
		\partial_\nu p(t,x) = 0, \quad t>0, \  x \in \partial \Omega;
	\end{array}
	\right.
\end{equation}
where 
\begin{equation}\label{def:fg}
	f(p)= b_1^0 d_2 s_h \frac{p(1-p)(p-\theta)}{b_1^0 (1-p) (1-s_h p)+ b_2^0 p}\quad \text{and}\quad g(p)=\frac{b_1^0 (1-p) (1-s_h p)}{b_1^0 (1-p) (1-s_h p)+b_2^0 p},
\end{equation}
and
\begin{equation}\label{def:psi}
	\psi(p) = \frac{p(1-p)(b_2^0 -b_1^0(1-s_h p))}{(1-p) (1-s_h p)+b_2^0 p},
\end{equation}
with
\begin{equation}\label{theta}
	\theta = \frac{1}{s_h}\lrp{1-\frac{d_1 b_2^0}{d_2 b_1^0}},
\end{equation}
which is strictly bounded between 0 and 1 under the condition $1-s_h<\frac{d_1 b_2^0}{d_2 b_1^0}<1$, which is a consequence of assumption \eqref{fitness}. This condition ensures that $f$ has  a root in $(0,1)$, which is biologically plausible, and will be an important feature in the analysis of the problem studied in this work.
Moreover, the cost functional $J$ reduces to
\begin{equation}\label{J0}
	J^0(u) = \int_\Omega K(x)^2(1-p(T,x))^2\,dx,
\end{equation}
and we obtain an asymptotic version of Problem \eqref{prob:full} reading
\begin{equation}\label{prob:reduced}\tag{$\mathcal{P}_{1,\text{diff}}$}
	\inf_{u\in \mathcal{U}_{T,C,M}} \int_\Omega K(x)^2 (1-p(T,x))^2\,dx, \ \text{ where } \begin{cases}
	p \text{ solves \eqref{eq:p}} \\
	\mathcal{U}_{T,C,M} \text{ is given by \eqref{def.UTCM}}
	\end{cases} .
\end{equation}
This model reduction allows us to study the problem in simpler terms, knowing that solutions of the simplified Problem \eqref{prob:reduced} will be asymptotically close to solutions of Problem \eqref{prob:full} in the sense of Gamma-convergence (we refer to \cite{DHP,SV_Reduction} for details). This simplification is consistent from a biological point of view as, in temperature ranges fit for mosquito life, birth rates can be up to three orders of magnitude higher than adult death rates for several species of disease-transmitting mosquitoes \cite{Mordecai2019}.
\end{paragraph}

\begin{paragraph}{Negligible mobility and initial release}
We further assume that the time distribution of the release is given by $u(t,x) = u_0(x) \delta_0(t)$. In other words, we consider that there is one single release, done at the initial time, and that the time it takes to do the release is negligible in comparison with the time window considered. Following the reasoning developed in \cite{DHP}, and, as stated in Section \ref{sec:Intro}, neglecting the active motion of mosquitoes by considering that the diffusion coefficient $D=0$, one can prove that equation \eqref{eq:p} simplifies into
\begin{equation}
	\label{eq:psimpl}
	\begin{cases}
		& \partial_t p(t,x) = f(p(t,x)), \quad t\in[0,T], \quad x\in\Omega, \\
		& p(0^+,x) = G^{-1}\lrp{\frac{u_0(x)}{K(x)}}, \quad x\in\Omega,
	\end{cases}
\end{equation}
where the function $G$ is defined as the antiderivative vanishing at zero of $1/g$, i.e.
$$
G(p) = \int_0^p \frac{d\nu}{g(\nu)}.
$$
Now that we performed the simplification of equation \eqref{eq:p} into \eqref{eq:psimpl}, the optimization Problem \eqref{prob:reduced} is recast as the Problem \eqref{prob:u0} presented in Section \ref{sec:Intro}, with $u_0$ linked to the initial data of \eqref{eq:psimpl}. However, we shall perform a last transformation of the Problem \eqref{prob:u0} into another equivalent one.
\end{paragraph}

\subsection{Optimal control problem studied}

Because $g$ is positive, we see that $G$, thus $G^{-1}$, is increasing. Looking at \eqref{eq:psimpl}, this implies that there is a one-to-one relation between the release carried at the initial time $u_0(x)$ and the initial data $p(0^+,x)$. Therefore, we can reformulate Problem \eqref{prob:u0} in terms of this initial proportion by defining $p_0(x):=G^{-1}\lrp{\frac{u_0(x)}{K(x)}}$ and considering it the new control variable. This leads to the following problem which shall be our focus in the present paper :

\begin{equation}
	\label{eq:psimpl_p0}
	\begin{cases}
		& \partial_t p(t,x) = f(p(t,x)), \quad t\in[0,T], \quad x\in\Omega, \\
		& p(0^+,x) = p_0(x), \quad x\in\Omega,
	\end{cases}
\end{equation}
With this new control variable, Problem \eqref{prob:u0} is rewritten as :
\begin{equation}\label{prob:p0}\tag{$\mathcal{P}_{p_0}$}
  \boxed{
    \inf_{p_0\in\mathcal{P}_{0,C,M}} \int_\Omega K(x)^2 (1-p(T,x))^2\,dx
  } \ \text{ where } \begin{cases}
	p \text{ solves \eqref{eq:psimpl_p0}} \\
	\mathcal{P}_{0,C,M} \text{ is given by \eqref{def:P0CM}}
	\end{cases} .
\end{equation}
\begin{equation}\label{def:P0CM}
  \mathcal{P}_{0,C,M} = \lrk{ p_0\in L^{\infty}(\Omega), \quad 0 \leq p_0\leq G^{-1}\lrp{\frac{M}{K(x)}} \text{ a.e.}, \quad \int_\Omega K(x) G(p_0(x))\,dx \leq C}.
\end{equation}

It should be noted that, since $g$ is decreasing, we have that $G$ is convex. Also, the underlying cost functional $J^0$ defined in \eqref{J0} is clearly continuous. However, because of the term $K(x)$, the cost functional $J^0$ is not necessarily convex (unless we add some very restrictive assumptions on $f$). On the one hand, Problem \eqref{prob:u0} involves only ordinary differential equation contrary to Problem \eqref{prob:full}. On the other hand, the lack of convexity highlighted above makes the analysis of Problem \eqref{prob:u0} quite challenging.

\section{Analysis of the optimal problem}\label{sec:analysis}
\subsection{Preliminaries}
Before analysing the problem in depth, we can obtain easily the following lemmas that will be useful later for the characterization of the solutions. First, we observe that the constraint of the problem is saturated and that the case $M|\Omega|\leq C$ is trivial:
\begin{lemma} \label{lem:constr_sat}
  If $u_0^*=K(x)G(p_0^*)$ is an optimal solution of the optimal control Problem \eqref{prob:u0} and $M|\Omega|>C$, then $\displaystyle \int_\Omega u_0^*(x)\,dx = C$, or, equivalently, $\displaystyle \int_\Omega K(x) G(p_0^*(x))\,dx = C$.
  
  Also, if $M|\Omega| \leq C$, the optimal solution is given by $u_0^* = M$ or equivalently $p_0^* = p_M := G^{-1}(\frac{M}{K})$.
\end{lemma}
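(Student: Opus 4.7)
The plan is to establish a single monotonicity principle and deduce both parts from it: increasing the release $u_0$ pointwise (in a nontrivial way) strictly decreases the cost $J^0(u_0) = \int_\Omega K(x)^2 (1-p(T,x))^2\,dx$, where $p(T,\cdot)$ is the terminal value of the ODE in \eqref{eq:psimpl} with initial datum $G^{-1}(u_0/K)$.

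The key is a chain of three monotonicities. First, since $g(p) > 0$ on $[0,1)$, the function $G$ is strictly increasing and so is $G^{-1}$ on its image; hence $u_0 \mapsto p_0 := G^{-1}(u_0/K)$ is strictly increasing pointwise. Second, $f$ is $C^1$ on $[0,1]$ with $f(0) = f(1) = 0$, so the ODE $\partial_t p = f(p)$ preserves the interval $[0,1]$ and, by the classical comparison principle, is strictly order-preserving in the initial condition: writing $q := \tilde p - p^*$ for two solutions with ordered initial data, $q$ satisfies a scalar linear ODE $\dot q = c(t,x)\,q$ with bounded coefficient $c(t,x)$, so $q(0,x) > 0$ implies $q(T,x) > 0$. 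Third, since $p(T,x) \in [0,1]$, the map $p \mapsto (1-p)^2$ is strictly decreasing. Combining the three implications, $\tilde u_0 \geq u_0^*$ with strict inequality on a measurable set of positive measure entails $J^0(\tilde u_0) < J^0(u_0^*)$.

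The first assertion then follows by a perturbation argument. Assume, for contradiction, that an optimal $u_0^*$ satisfies $\int_\Omega u_0^*\,dx < C$; since the second assertion covers the case $M|\Omega| \leq C$, I may assume $M|\Omega| > C$. Then $\{u_0^* < M\}$ has positive measure, for otherwise $u_0^* = M$ almost everywhere and $\int u_0^*\,dx = M|\Omega| > C$, contradicting $\int u_0^* < C$. Picking a measurable subset $B$ of this set and $\delta > 0$ small enough to preserve both $u_0^* + \delta \mathbf{1}_B \leq M$ a.e.\ and $\int u_0^*\,dx + \delta|B| \leq C$, the perturbed release $\tilde u_0 := u_0^* + \delta \mathbf{1}_B$ lies in $\mathcal{U}_{0,C,M}$ and strictly dominates $u_0^*$ on $B$. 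The monotonicity principle then gives $J^0(\tilde u_0) < J^0(u_0^*)$, contradicting the optimality of $u_0^*$. For the second assertion, if $M|\Omega| \leq C$ the integral bound is automatic for every admissible $u_0$, so $\mathcal{U}_{0,C,M}$ reduces to the pointwise interval $\{0 \leq u_0 \leq M\}$; the pointwise-maximal element $u_0 \equiv M$ is then, uniquely up to a null set, optimal by the monotonicity principle, which yields $p_0^* = G^{-1}(M/K) = p_M$.

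The main obstacle is the strict-propagation step in the ODE comparison: it is what turns a nontrivial weak perturbation of $u_0^*$ into a strict pointwise gain of $p(T,\cdot)$, which is in turn what makes the perturbed competitor strictly cheaper. Everything else is routine measure-theoretic bookkeeping to select $B$ and $\delta$ respecting both constraints simultaneously.
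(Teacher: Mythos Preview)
Your proof is correct and follows essentially the same approach as the paper's: the paper's two-sentence argument invokes exactly the monotonicity of $G^{-1}$ and the ordering of solutions to \eqref{eq:psimpl} with respect to initial data, and then deduces both claims from this. You have simply spelled out in detail the steps the paper leaves implicit (the strict comparison via the linearised equation, the explicit construction of the perturbation $\tilde u_0 = u_0^* + \delta\mathbf{1}_B$, and the verification that $p(T,x)<1$ so that $p\mapsto (1-p)^2$ is strictly decreasing).
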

\begin{proof}
  The first point is a trivial consequence of the fact that, on the one hand, $G$ is increasing therefore so is $G^{-1}$, and on the other hand, the solutions of \eqref{eq:psimpl} are ordered, that is if $p_1(0^+,x)\leq p_2(0^+,x)$ then $p_1(\cdot,x)\leq p_2(\cdot,x)$.

  Using also the monotonicity of the solutions of \eqref{eq:psimpl} with respect to their initial data, we get the second point.
\end{proof}

\begin{remark}\normalfont{
    We recall that, problems \eqref{prob:p0} and \eqref{prob:u0} are equivalent. Indeed, since $G$ is continuous and strictly increasing, we have the simple one-to-one relation $u_0^*(x):=K(x)G(p_0^*(x)).$
	}
\end{remark}

\subsection{Optimality conditions}

As a consequence of the previous lemma, we will always assume, from now on, that $M|\Omega| > C$. The following Lemma provides a description of the optimal solution using the optimality conditions:
\begin{lemma}\label{lem:w}
  Let us define the switch function $w_T$ by
  \begin{equation}
    \label{eq:w}
    \fbox{$\displaystyle w_{T}(p_{0}) =- g(p_0) (1-p(T,x)) \exp\left(\int_0^T f'(p(s,x))\,ds\right) < 0$}.
  \end{equation}
  Let $u_0^*$ be any optimal solution of the optimal control problem \eqref{prob:u0}.
  Then, there exists $\lambda^* \geq 0$ such that the optimal solution $u_0^*$ verifies the first order conditions:
\begin{itemize}
\item on $\{u_0^* = M\}=\lrk{p_0^* = p_M := G^{-1}\lrp{\frac{M}{K(x)}}}$, we have $w_T\leq -\frac{\lambda^*}{K(x)}$,
\item on $\{u_0^*=0\}=\{p_0^* = 0\}$, we have $w_T\geq -\frac{\lambda^*}{K(x)}$,
\item on $\{0<u_0^*<M\}=\{0<p_0^*<p_M\}$,  we have $w_T=-\frac{\lambda^*}{K(x)}$ and each minimum should verify the second order condition
$$
\frac{\partial w_T}{\partial p_0} \geq 0.
$$
\end{itemize}
\end{lemma}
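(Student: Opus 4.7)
The plan is to exploit that, since $D=0$, the state equation $\partial_t p = f(p)$ is pointwise in $x$: the value $p(T,x)$ depends only on the initial value $p_0(x) = G^{-1}(u_0(x)/K(x))$ and not on any spatial derivatives. Hence, once the single global constraint $\int_\Omega u_0\,dx \leq C$ is absorbed via a scalar Lagrange multiplier, the optimization decouples into a one-parameter family of one-dimensional problems in $u_0(x) \in [0,M]$, and the three cases in the statement are precisely the Karush--Kuhn--Tucker conditions for these pointwise problems.

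By Lemma \ref{lem:constr_sat} the integral constraint is saturated at the optimum, and since it is affine the Lagrange multiplier theorem produces $\lambda^* \geq 0$ such that $u_0^*$ minimizes $L(u_0) := J^0(u_0) + 2\lambda^* \int_\Omega u_0(x)\,dx$ over the pointwise box $\{0 \leq u_0 \leq M\}$; the factor $2$ is chosen only to match the normalization of the statement. To compute the derivative of the integrand with respect to $u_0(x)$, I would combine two ingredients. First, the change of variable $p_0 = G^{-1}(u_0/K)$ gives $\partial p_0/\partial u_0 = g(p_0)/K(x)$ since $G' = 1/g$. Second, the linearization of the flow of $\partial_t p = f(p)$ yields $\partial p(T,x)/\partial p_0 = \exp\bigl(\int_0^T f'(p(s,x))\,ds\bigr)$. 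A direct chain-rule computation then produces $\frac{\partial}{\partial u_0(x)}\bigl[ K(x)^2(1-p(T,x))^2 + 2\lambda^* u_0(x) \bigr] = 2K(x)\, w_T(p_0(x)) + 2\lambda^*$.

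The three pointwise conditions then follow from minimality in $[0,M]$: at an interior critical point the derivative vanishes, giving $w_T(p_0^*) = -\lambda^*/K(x)$; on $\{u_0^* = 0\}$ the derivative must be $\geq 0$, yielding $w_T \geq -\lambda^*/K(x)$; on $\{u_0^* = M\} = \{p_0^* = p_M\}$ the derivative must be $\leq 0$, yielding $w_T \leq -\lambda^*/K(x)$. For the second-order condition at an interior optimum, I would differentiate once more: using $\partial w_T/\partial u_0 = (\partial w_T/\partial p_0)\, g(p_0)/K(x)$, the second derivative of the integrand with respect to $u_0$ equals $2g(p_0)\, \partial w_T/\partial p_0$, and since $g > 0$ on $(0,1)$ the necessary condition for a local minimum reduces to $\partial w_T/\partial p_0 \geq 0$, as claimed.

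The only real subtlety is justifying the use of a single scalar multiplier alongside the $L^\infty$ box constraint, but this is standard KKT for an affine inequality: the constraint qualification holds trivially, the problem separates pointwise in $x$ once the multiplier is fixed, and the remainder of the proof reduces to the chain-rule computation above together with the explicit formula for the linearized flow. The sign $w_T < 0$ stated in \eqref{eq:w} follows directly from $g > 0$, $(1-p(T,x)) > 0$, and the positivity of the exponential, and is consistent with $-\lambda^*/K(x) \leq 0$.
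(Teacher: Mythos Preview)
Your proposal is correct and follows essentially the same approach as the paper's proof: both introduce a scalar Lagrange multiplier for the integral constraint, compute the first variation via the linearized flow $\partial p(T)/\partial p_0 = \exp\bigl(\int_0^T f'(p(s))\,ds\bigr)$, and then read off the three cases from the sign of the resulting switch function. The only cosmetic differences are that the paper parametrizes by $p_0$ and packages the flow sensitivity through an adjoint state $q$, whereas you parametrize by $u_0$ and apply the chain rule directly; these yield the same expression for $w_T$ and the same second-order condition.
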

\begin{remark}\normalfont{
It will be useful in the following to notice that in \eqref{eq:w}, the switch function depends on $x$ only through the initial condition $p_0(x)$.}
\end{remark}

\begin{proof} We verify the first and second order optimality conditions.
\begin{itemize}
\item \textbf{First order optimality condition}
\end{itemize}

Let us introduce the Lagrangian
$$
\mathcal{L}(p_0) = \frac 12\int_\Omega K(x)^2(1-p(T,x))^2\,dx + \lambda\left(\int_\Omega K(x) G(p_0)\,dx - C\right),
$$
for some $\lambda\in\RR^+$.
To compute its derivative, we introduce the linearized system
\begin{equation}
  \label{eq:linear_simpl}
  \partial_t \delta p = f'(p) \delta p, \qquad \delta p(0^+,x) = h,
\end{equation}
and the adjoint state
\begin{equation}
  \label{eq:adjoint_simpl}
  -\partial_t q = f'(p) q, \qquad q(T,x) = -K(x)^2 (1-p(T,x)) < 0.
\end{equation}
In particular, from \eqref{eq:linear_simpl} we deduce 
\begin{equation}
\frac{\partial p(T,x)}{\partial p_0(x)} = \exp\left(\int_0^T f'(p(s,x))ds \right).\label{eq:dpT/dp0}
\end{equation}
Then, to verify the first order optimality condition, we compute
$$
d\mathcal{L}(p_0)\cdot h = - \int_\Omega K(x)^2 (1-p(T,x)) \delta p(T,x)\,dx + \lambda\left(\int_\Omega K(x) G'(p_0) h \,dx\right).
$$
Using \eqref{eq:linear_simpl} and \eqref{eq:adjoint_simpl}, we deduce
$$
0 = \int_0^T \int_\Omega \partial_t (\delta p q) \,dxdt = \int_\Omega \delta p(T,x) q(T,x) \,dx - \int_\Omega h q(0,x)\,dx.
$$
Therefore,
\begin{equation}
  \label{eq:calL}
d\mathcal{L}(p_0)\cdot h = \int_\Omega h \left( q(0,x) + \lambda K(x) G'(p_0)\right)\,dx
= \int_\Omega \frac{K(x)}{g(p_0)} h \left( \frac{1}{K(x)} g(p_0) q(0,x) + \lambda \right)\,dx,
\end{equation}
where we have used the fact that $G'=\frac{1}{g}$.
Then, solving \eqref{eq:adjoint_simpl} we get
$$
q(0,x) = - K(x)^2 (1-p(T,x)) \exp\left(\int_0^T f'(p(s,x))\,ds\right).
$$
Injecting into \eqref{eq:calL}, we obtain
\begin{equation}
  \label{eq:calL2}
  d\mathcal{L}(p_0)\cdot h = \int_\Omega \frac{K(x)^2}{g(p_0(x))} h(x) \left( w_T(p_0(x)) + \frac{\lambda}{K(x)}\right)\,dx.
\end{equation}


Then, noting that the function $g$ is positive, thanks to a classical application of the Pontryagin Maximum Principle (PMP) \cite{Trelat_Book}, we obtain that there exists $\lambda^*>0$ such that, on the set $\{u_0^* = 0\}$, we have $w_T+\frac{\lambda^*}{K(x)} \geq 0$, on the set $\{u_0^* = M\}$, we have $w_T+\frac{\lambda^*}{K(x)} \leq 0$, and on the set $\{0<u_0^*<M\}$, we have $w_T+\frac{\lambda^*}{K(x)} = 0$.

\begin{itemize}
\item \textbf{Second order optimality condition}
\end{itemize}

We compute the second order derivative of the Lagrangian from the expression \eqref{eq:calL2},
$$
d^2\mathcal{L}(p_0)\cdot h\cdot h = - \int_\Omega \frac{K(x)^2 g'(p_0)}{g(p_0)^2} h^2\lrp{w_T + \frac{\lambda}{K(x)}}\,dx + \int_\Omega \frac{K(x)^2}{g(p_0)} h^2 \frac{\partial w_T}{\partial p_0} \,dx.
$$
Then, on the set $\{0<u_0^*<M\}=\{0<p_0^*<p_M\}$, we have that
$$
d^2\mathcal{L}(p_0)\cdot h\cdot h = \int_\Omega \frac{K(x)^2}{g(p_0)} h^2 \frac{\partial w_T}{\partial p_0} \,dx.
$$
At the minimum we must have $d^2\mathcal{L}(p_0)\cdot h\cdot h\geq 0$ for every $h$. Since $g$ is positive, the result follows.
\end{proof}


\subsection{Study of the switch function}

We devote this section to the study of the switch function $w_{T}(p_{0})$ defined in \eqref{eq:w}. This function depends on $T$ and the initial condition $p_0(x)$. Nevertheless, in this section, we are only interested in its behavior as a function of the initial condition. Therefore, in what follows, we fix a $x\in\Omega$ and we consider $T$ as a parameter. To simplify the notation, we just write $w(p_0)$. Thus, 
$$w(p_0)=-g(p_0) (1-p(T)) \exp\left(\int_0^T f'(p(s))\,ds\right)$$ 

We now present a Lemma on the monotonicity of $w$ that will play a crucial role in the characterization of the solutions of Problem \eqref{prob:p0}. Before stating it, we require some additional preliminaries and notations. Since we assumed $b_{2}^{0}\leq b_{1}^{0}$ and $d_{1}\leq d_{2}$,
one can prove (see Appendix \ref{app:H}) that $f^{\prime\prime}$
admits a unique zero $\theta_{2}$ in $(0,1)$. Additionally, for
any $p\in[0,1]$, we have $f^{\prime\prime}(p)>0$ if and only if
$p<\theta_{2}$. Setting $\overline{\theta}:=\max(\theta,\theta_{2})$, with $\theta$  defined in \eqref{theta}, we now introduce the following function
\begin{equation}\label{def:A}
p_0\mapsto A(p_0):=\frac{g'(p_0)}{g(p_0)}-\frac{1}{1-p(T)}e^{\int_0^Tf'(p(s))ds}+\int_0^Tf''(p(s))e^{\int_0^s f'(p(\sigma))d\sigma}ds.
\end{equation}
and the following hypothesis on it
\begin{equation} \tag{$\mathcal{H}$} \label{H}
\text{Function } p_0\mapsto A(p_0) \text{ changes sign at  most once in } (0,\bar{\theta}).
\end{equation}
We investigate
for which parameters \eqref{H} is true in Appendix \ref{app:H}.

\begin{lemma}\label{lem:w_monotonicity}
Assume \eqref{H} holds. There exists $T_0>0$, given by
\begin{equation} \label{eq:T0}
	T_0:=\frac{1}{f'(0)}\ln\lrp{\frac{ f''(0)g(0)-f'(0)g'(0)}{g(0)\lrp{f''(0)-f'(0)}}},
\end{equation}
such that, if $T\leq T_0$, then $\frac{\partial w}{\partial p_0} (p_0)> 0$ for any $p_0\in(0,1)$. If $T> T_0$, there exists one single $p_0^{T}\in(0,\bar{\theta})$ such that 
\begin{equation}
\frac{\partial w}{\partial p_0}(p_0) < 0, \quad \forall p_0\in(0,p_0^T) \quad \textup{and} \quad \frac{\partial w}{\partial p_0}(p_0) > 0, \quad \forall p_0\in(p_0^T,1).\label{eq:p0T}
\end{equation}
\end{lemma}
Let us notice that we obtain an explicit expression of $T_0$ which is given in \eqref{eq:T0} in the proof below.

\begin{proof} We first look at the sign of $\frac{\partial w}{\partial p_0}$ at $p_0=0$ to derive the value of $T_0$.
\begin{paragraph}{Sign of $\frac{\partial w}{\partial p_0}$ at $p_0=0$.}
Recalling \eqref{eq:dpT/dp0}, a simple computation yields
\begin{align} \nonumber 
\frac{\partial w}{\partial p_0}  = & -g'(p_0)(1-p(T))e^{\int_0^Tf'(p(s))ds)}+g(p_0)e^{2\int_0^Tf'(p(s))ds} \\ \label{eq:part_w}
 & - g(p_0)(1-p(T))e^{\int_0^T f'(p(s))ds}\int_0^Tf''(p(s))e^{\int_0^s f'(p(\sigma))d\sigma}ds,
\end{align}
As a result, using that when $p_0=0$ we have that $p(t)=0$ for all $t\in[0,T]$, we obtain
 \begin{align*}
\frac{\partial w}{\partial p_0}\bigg|_{p_0=0}  = & -g'(0)e^{T f'(0)}+g(0)e^{2 T f'(0)} - g(0)e^{T f'(0)}\int_0^Tf''(p(s)))e^{s f'(0)}ds\Big] \\
= & -e^{T f'(0)}\Big[g'(0)-g(0)e^{T f'(0)} + g(0)\frac{f''(0)}{f'(0)}\lrp{e^{Tf'(0)}-1}\Big].
\end{align*}Let us recall that $g(0)>0>g'(0)$ and $f'(0)<0<f''(0)$. From the above expression we deduce that

\begin{align*}
& \frac{\partial w}{\partial p_0}\bigg|_{p_0=0}\geq 0 \\
\Leftrightarrow & \ g'(0)-g(0)e^{T f'(0)} + g(0)\frac{f''(0)}{f'(0)}\lrp{e^{Tf'(0)}-1} \leq 0  \\ 
\Leftrightarrow & \  e^{T f'(0)}g(0)\lrp{\frac{f''(0)}{f'(0)}-1}\leq g(0)\frac{f''(0)}{f'(0)}-g'(0)  \\ 
 \Leftrightarrow & \ e^{T f'(0)}\geq \frac{ f''(0)g(0)-f'(0)g'(0)}{g(0)\lrp{f''(0)-f'(0)}}  \\
 \Leftrightarrow & \ T \leq \frac{1}{f'(0)}\ln\lrp{\frac{ f''(0)g(0)-f'(0)g'(0)}{g(0)\lrp{f''(0)-f'(0)}}} \\
  \Leftrightarrow & \ T \leq T_0.
\end{align*}
One can check that the value of $T_0$ is always well defined and positive. Indeed, the argument of the logarithm is positive since $$f''(0)g(0)-f'(0)g'(0)=\frac{b_2^0}{(b_1^0)^2}(2d_1b_1^0s_h + d_2b_1^0 - b_2^0d_1)>\frac{b_2^0}{(b_1^0)^2}2d_1b_1^0s_h>0.$$ We recall that $d_2b_1^0>b_2^0d_1$ by the biological assumptions made so far. On the other hand, the argument of the logarithm is also smaller than one, since $g(0)=1$ and $f''(0)-f'(0)g'(0)<f''(0)-f'(0)$. Consequently, if $T>T_0$, then $\frac{\partial w}{\partial p_0} < 0 $ in a neighborhood of $p_0=0$. 
\end{paragraph}

Note that we can rewrite expression \eqref{eq:part_w} using $A(p_0)$ as defined in equation \eqref{def:A},
\begin{eqnarray}
\nonumber \frac{\partial w}{\partial p_0} & = & -g(p_0)(1-p(T))e^{\int_0^Tf'(p(s))ds)}\lrb{\frac{g'(p_0)}{g(p_0)}-\frac{1}{1-p(T)}e^{\int_0^Tf'(p(s))ds}+\int_0^Tf''(p(s))e^{\int_0^s f'(p(\sigma))d\sigma}ds}\\ \label{eq:part_w_var}
   & = & \ w(p_0) A(p_0),
\end{eqnarray}
Recall that $w(p_0)<0$ and therefore $\frac{\partial w}{\partial p_0}$ changes signs as many times, and at the same points, as $A(p_0)$.


Fix $T>0$ and set $\bar{\theta}:=\max(\theta,\theta_2)\in(0,1)$. Let us prove $\frac{\partial w}{\partial p_0}>0$ for all $p_0\in(\bar{\theta},1)$. From \eqref{eq:part_w_var} and \eqref{eq:w}, it is enough to prove that $A=A(p_0)$ defined by \eqref{def:A} is negative on $(\bar{\theta},1)$. The first two terms of \eqref{def:A} are strictly negative for all $p_0\in(0,1)$. Therefore it is sufficient to prove that 
$$\text{If }p_0\in (\bar{\theta},1),\qquad f''(p(t))\leq 0  \mbox{ for all } t\in(0,T).$$
Let us recall that $\theta_2$ is the unique zero of $f''$ in $(0,1)$, and that $f''<0$ in $(\theta_2,1)$ (see Proposition \ref{prop:f2zero}). Let $p_0>\bar{\theta}$. Then, since $p_0>\theta$, one can quickly check that $t\mapsto p(t)$ is non-decreasing. Therefore, $p(t)\geq p_0 > \theta_2$, so that $f''(p(t))\leq 0$ for all $t$. Thus $\frac{\partial w}{\partial p_0}>0$ on $(\bar{\theta},1)$.

\begin{paragraph}{Conclusion.}
In conclusion, if $T\leq T_0$, we proved that $\frac{\partial w}{\partial p_0}\big|_{p_0=0}$ is positive and so is $\frac{\partial w}{\partial p_0}$ for all $p_0\in(\bar{\theta},1)$. By Hypothesis \eqref{H}, $\frac{\partial w}{\partial p_0}$ changes sign at most once, by contradiction $\frac{\partial w}{\partial p_0}$ cannot change sign in $(0,\bar{\theta})$, and thus $\frac{\partial w}{\partial p_0}\geq 0$ for all $p_0\in(0,1)$. On the other hand, if $T>T_0$, $\frac{\partial w}{\partial p_0}\big|_{p_0=0} < 0$ and $\frac{\partial w}{\partial p_0}>0$ for all $p_0>\bar{\theta}$. Therefore $w$ has at least one minimum. Again, by Hypothesis \eqref{H}, $\frac{\partial w}{\partial p_0}$ changes sign at most once, and thus the minimum, that we note $p_0^T$, must be unique. \eqref{eq:p0T} follows.
\end{paragraph}

\end{proof}

\begin{figure}[htbp!]
	\centering 
	\begin{tikzpicture}[brace/.style={thick,decorate,
		decoration={calligraphic brace, amplitude=7pt,raise=0.5ex}},scale=1.3]
	
	\draw[->] (0,5) -- (5.5,5)node[anchor=west] {$p_0$};
	
	\draw[->, color=purple] (5,0) -- (5,5.5); 

	
	\draw [domain=0:4.2, line width=1.0pt, color=blue] plot ({\x},{2.5+rad(atan(2*(\x-2)))});

	\draw[blue,fill=blue] (2,2.5) circle (.25ex);
	
	\draw[blue,fill=blue] (0,5) circle (.25ex);
	\draw[blue,fill=blue] (2,5) circle (.25ex);
	\draw[blue,fill=blue] (4.2,5) circle (.25ex);
	
	\draw[blue,fill=blue] (5,0.5) circle (.25ex);
	\draw[blue,fill=blue] (5,2.5) circle (.25ex);
	\draw[blue,fill=blue] (5,4.4) circle (.25ex);
	
	\draw [dotted, line width=1.0pt, color=green] (4.2,4.4) -- (5,4.4)node[anchor=west,color=purple]{$-\lambda^*/K(x)$};
	\draw [dotted, line width=1.0pt, color=green] (4.2,4.4) -- (4.2,5)node[anchor=south,color=black]{$p_0^*=p_M$};
	
	\draw [dotted, line width=1.0pt, color=green] (2,2.5) -- (5,2.5 )node[anchor=west,color=purple]{$-\lambda^*/K(x)$};
	\draw [dotted, line width=1.0pt, color=green] (2,2.5) -- (2,5)node[anchor=south,color=black]{$p_0^*=w_T^{-1}\lrp{-\frac{\lambda^*}{K(x)}}$};
	
	\draw [dotted, line width=1.0pt, color=green] (0,0.5) -- (5,0.5 )node[anchor=west,color=purple]{$-\lambda^*/K(x)$};
	\draw [dotted, line width=1.0pt, color=green] (0,0.5) -- (0,5)node[anchor=south,color=black]{$p_0^*=0$};

	\draw [dashed, line width=1.0pt, color=black] (0,2.5+1.347) -- (5,2.5+1.347)node[anchor=west]{$w_T(p_M)$}; 
	\draw [dashed, line width=1.0pt, color=black] (0,2.5-1.326) -- (5,2.5-1.326)node[anchor=west]{$w_T(0)$};

	\end{tikzpicture}
	\caption{Typical shape of $p_0\mapsto w_{T}(p_0)$, in the case $T\leq T_0$.}\label{fig:TleqT0}
\end{figure}

\begin{figure}[htbp!]
	\centering 
	\begin{tikzpicture}[brace/.style={thick,decorate,
		decoration={calligraphic brace, amplitude=7pt,raise=0.5ex}},scale=1.2]
	
	
	\draw[->] (0,5) -- (5.5,5)node[anchor=west] {$p_0$};
	
	\draw[->, color=purple] (5,0) -- (5,5.5); 

	\draw [domain=0:1, line width=1.0pt, color=blue] plot ({\x},{0.8+1.8*(\x^2-2*\x*1.1+1.1^2)});
	
	
	\draw [dashed, line width=1.0pt, color=black] (0,0.8) -- (5,0.8)node[anchor=west] {$w(p_M)$};
	\draw [dashed, line width=1.0pt, color=black] (0,0.8+1.8*1.1^2) -- (5,0.8+1.8*1.1^2) node[anchor=west] {$w(0)$};
	
	\draw [dotted, line width=1.0pt, color=green] (1,3.7) -- (5,3.7)node[anchor=west,color=purple]{$-\lambda^*/K(x)$};
	\draw [dotted, line width=1.0pt, color=green] (1,3.7) -- (1,5)node[anchor=south,color=black]{$\qquad p_0^*=p_M$};
	
	\draw [dotted, line width=1.0pt, color=green] (0.6,1.682) -- (5,1.682)node[anchor=west,color=purple]{$-\lambda^*/K(x)$};
	\draw [dotted, line width=1.0pt, color=green] (0.6,1.682) -- (0.6,5.5)node[anchor=south,color=black]{$p_0^*\in\left\{ 0,p_M \right\}$};
	
	\draw [dotted, line width=1.0pt, color=green] (0,0.3) -- (5,0.3)node[anchor=west,color=purple]{$-\lambda^*/K(x)$};
	\draw [dotted, line width=1.0pt, color=green] (0,0.3) -- (0,5)node[anchor=south,color=black]{$p_0^*=0 \quad$}node[anchor=north east,color=black]{\Large{A)} \qquad};
	
	\draw[blue,fill=blue] (1,5) circle (.25ex);
	\draw[blue,fill=blue] (5,0.3) circle (.25ex);
	
	\draw[blue,fill=blue] (5,1.682) circle (.25ex);
	
	\draw[blue,fill=blue] (5,3.7) circle (.25ex);
	\draw[blue,fill=blue] (0,5) circle (.25ex);
	
	\draw[->] (0,5-6.2) -- (5.5,5-6.2)node[anchor=west] {$p_0$};
	
	\draw[->, color=purple] (5,0-6.2) -- (5,5.5-6.2); 
	
	\draw [domain=0:1.9, line width=1.0pt, color=blue] plot ({\x},{-6.2+0.8+1.8*(\x^2-2*\x*1.1+1.1^2)});
	
	
	\draw [dashed, line width=1.0pt, color=black] (0,-6.2+0.8+1.8*1.1^2) -- (5,-6.2+0.8+1.8*1.1^2)node[anchor=west] {$w(0)$};
	\draw [dashed, line width=1.0pt, color=black]  (0,-6.2+0.8+1.8*0.8^2) -- (5,-6.2+0.8+1.8*0.8^2) node[anchor=west] {$w(p_M)$};
	\draw [dashed, line width=1.0pt, color=black] (0,-6.2+0.8) -- (5,-6.2+0.8) node[anchor=west] {$\min_{p_0} w(p_0)$};
	
	\draw [dotted, line width=1.0pt, color=green] (1.9,3.7-6.2) -- (5,3.7-6.2)node[anchor=west,color=purple]{$-\lambda^*/K(x)$};
	\draw [dotted, line width=1.0pt, color=green] (1.9,3.7-6.2) -- (1.9,4.4-6.2)node[anchor=south,color=black]{$\qquad \quad p_0^*=p_M$};
	
	\draw [dotted, line width=1.0pt, color=green] (0.35,2.425-6.2) -- (5,2.425-6.2)node[anchor=west,color=purple]{$-\lambda^*/K(x)$};
	\draw [dotted, line width=1.0pt, color=green] (0.35,2.425-6.2) -- (0.35,5.5-6.2)node[anchor=south,color=black]{$p_0^*\in\left\{0,p_M\right\} \qquad$};
	
	\draw [dotted, line width=1.0pt, color=green] (1.6,1.25-6.2) -- (5,1.25-6.2)node[anchor=west,color=purple]{$-\lambda^*/K(x)$};
	\draw [dotted, line width=1.0pt, color=green] (1.6,1.25-6.2) -- (1.6,5-6.2)node[anchor=south,color=black]{$\qquad \qquad p_0^*\in\left\{0,w^{-1}\lrp{-\frac{\lambda^*}{K(x)}}\right\}$};
	
	\draw [dotted, line width=1.0pt, color=green] (0,0.3-6.2) -- (5,0.3-6.2)node[anchor=west,color=purple]{$-\lambda^*/K(x)$};
	\draw [dotted, line width=1.0pt, color=green] (0,0.3-6.2) -- (0,5-6.2)node[anchor=south,color=black]{$p_0^*=0 \qquad$}node[anchor=north east,color=black]{\Large{B)} \qquad};
	
	\draw[blue,fill=blue] (0,5-6.2) circle (.25ex);
	\draw[blue,fill=blue] (5,3.7-6.2) circle (.25ex);
	
	\draw[blue,fill=blue] (5,2.425-6.2) circle (.25ex);
	
	\draw[blue,fill=blue] (1.6,5-6.2) circle (.25ex);
	\draw[blue,fill=blue] (1.6,1.25-6.2) circle (.25ex);
	\draw[blue,fill=blue] (5,1.25-6.2) circle (.25ex);
	
	\draw[blue,fill=blue] (1.9,5-6.2) circle (.25ex);
	\draw[blue,fill=blue] (5,0.3-6.2) circle (.25ex);
	
	\draw[->] (0,5-12.4) -- (5.5,5-12.4)node[anchor=west] {$p_0$};
	
	\draw[->, color=purple] (5,0-12.4) -- (5,5.5-12.4); 
	
	\draw [domain=0:1.9, line width=1.0pt, color=blue] plot ({\x},{-12.4+0.8+1.8*(\x^2-2*\x*1.1+1.1^2)});
	\draw [domain=1.9:4.2, line width=1.0pt, color=blue] plot ({\x},{-12.4+0.8+1.8*0.8^2+1.4*rad(atan(2.14*(\x-1.9)))});
	
	
	\draw [dashed, line width=1.0pt, color=black] (0,-12.4+0.8+1.8*1.1^2) -- (5,-12.4+0.8+1.8*1.1^2) node[anchor=west] {$w(0)$};
	\draw [dashed, line width=1.0pt, color=black] (0,-12.4+0.8+1.8*0.8^2+1.919) -- (5,-12.4+0.8+1.8*0.8^2+1.919) node[anchor=west] {$w(p_M)$};
	\draw [dashed, line width=1.0pt, color=black] (0,-12.4+0.8) -- (5,-12.4+0.8) node[anchor=west] {$\min_{p_0} w(p_0)$};
	
	\draw [dotted, line width=1.0pt, color=green] (4.2,4.4-12.4) -- (5,4.4-12.4)node[anchor=west,color=purple]{$-\lambda^*/K(x)$};
	\draw [dotted, line width=1.0pt, color=green] (4.2,4.4-12.4) -- (4.2,5-12.4)node[anchor=south,color=black]{$p_0^*=p_M$};
	
	\draw [dotted, line width=1.0pt, color=green] (1.55,1.16-12.4) -- (5,1.16-12.4)node[anchor=west,color=purple]{$-\lambda^*/K(x)$};
	\draw [dotted, line width=1.0pt, color=green] (1.55,1.16-12.4) -- (1.55,5.35-12.4)node[anchor=south,color=black]{$p_0^*\in\left\{0,w^{-1}\lrp{-\frac{\lambda^*}{K(x)}}\right\}$};
	
	\draw [dotted, line width=1.0pt, color=green] (2.7,3.41-12.4) -- (5,3.41-12.4)node[anchor=west,color=purple]{$-\lambda^*/K(x)$};
	\draw [dotted, line width=1.0pt, color=green] (2.7,3.41-12.4) -- (2.7,4.25-12.4)node[anchor=south,color=black]{\hspace{0.1cm}\small{$\quad p_0^*=w^{-1}\lrp{-\frac{\lambda^*}{K(x)}}$}};
	
	\draw [dotted, line width=1.0pt, color=green] (0,0.3-12.4) -- (5,0.3-12.4)node[anchor=west,color=purple]{$-\lambda^*/K(x)$};
	\draw [dotted, line width=1.0pt, color=green] (0,0.3-12.4) -- (0,5-12.4)node[anchor=south,color=black]{$p_0^*=0$}node[anchor=north east,color=black]{\Large{C)} \qquad};

	\draw[blue,fill=blue] (0,5-12.4) circle (.25ex);
	\draw[blue,fill=blue] (5,4.4-12.4) circle (.25ex);
	
	\draw[blue,fill=blue] (1.55,5-12.4) circle (.25ex);
	\draw[blue,fill=blue] (1.55,1.16-12.4) circle (.25ex);
	\draw[blue,fill=blue] (5,1.16-12.4) circle (.25ex);
	
	\draw[blue,fill=blue] (2.7,5-12.4) circle (.25ex);
	\draw[blue,fill=blue] (2.7,3.41-12.4) circle (.25ex);
	\draw[blue,fill=blue] (5,3.41-12.4) circle (.25ex);
	
	\draw[blue,fill=blue] (4.2,5-12.4) circle (.25ex);
	\draw[blue,fill=blue] (5,0.3-12.4) circle (.25ex); 
	
	\end{tikzpicture}
	\caption{Schematic representation of $w$, as a function of $p_0$ in case $T > T_0$. As $p_M$ increases (from top to bottom) the three diagrams, that we call A, B and C, show the three possible relative positions of $w(0)$, $w(p_M)$ and $\min_{p_0} w$.}\label{fig:TgeqT0}
\end{figure}

\subsection{The case $T\protect\leq T_{0}$}\label{sec:TleqT0}
First, we place ourselves in the case $T\leq T_{0}$. Let us introduce the following mappings defined on $\RR^+$
\begin{equation}\label{def:psi_smallT}
\Lambda\mapsto\psi_{x,T}(\Lambda):=\begin{cases}
0 & \text{if }-\Lambda\leq w_{T}(0),\\
p_{M}(x):= G^{-1}\left(\frac{M}{K(x)}\right) & \text{if }-\Lambda\geq w_{T}(p_{M}(x)),\\
w_{T}^{-1}(-\Lambda) & \text{if }-\Lambda\in\left(w_{T}(0),w_{T}(p_{M}(x))\right).
\end{cases}
\end{equation}
and
\begin{equation}\label{def:I_smallT}
\lambda\mapsto I(\lambda):=\int_{\Omega}K(x)G\lrp{\psi_{x,T}\lrp{\frac{\lambda}{K(x)}}}\,dx.
\end{equation}
In the following Theorem, we state the main result for the case $T\leq T_0$.
\begin{theorem}\label{theo:TleqT0} Assume $T\leq T_0$, $0<C<M|\Omega|$ and \eqref{H}. Then, there exists a unique $p_0^*\in\mathcal{P}_{0,C,M}$, that solves Problem \eqref{prob:p0}. It is given by $$p_0^*(x)=\psi_{x,T}\lrp{\frac{\lambda^{*}}{K(x)}} \ \text{ for any } \lambda^* \text{ such that } \ I(\lambda^{*})=C.$$
\end{theorem}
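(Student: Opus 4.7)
The plan is to exploit the strict monotonicity of the switch function $w_T$ on $[0,1)$ coming from Lemma \ref{lem:w_monotonicity} for $T\leq T_0$, and turn it into a convexity property through the change of unknown $q_0(x):=G(p_0(x))$. In these new variables the admissible set becomes $\{q_0\in L^\infty(\Omega)\ :\ 0\leq q_0\leq M/K(x)\text{ a.e.},\ \int_\Omega K(x)q_0(x)\,dx\leq C\}$, which is convex; and since the ODE in \eqref{eq:psimpl} is autonomous and $x$-independent, the time-$T$ flow $\Phi_T:p_0\mapsto p(T)$ is a single scalar function. The cost thus reads $\int_\Omega K(x)^2\phi(q_0(x))\,dx$ with $\phi(q):=(1-\Phi_T(G^{-1}(q)))^2$; using $(G^{-1})'=g$ and \eqref{eq:w}, a direct computation yields $\phi'(q)=2w_T(G^{-1}(q))$, so the strict monotonicity of $w_T$ is precisely strict convexity of $\phi$, and the problem becomes a strictly convex minimization over a convex set.

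Having this structure, I would construct the candidate minimizer explicitly. Since $w_T$ is a strictly increasing continuous bijection from $[0,p_M(x)]$ onto $[w_T(0),w_T(p_M(x))]$, the mapping $\psi_{x,T}$ in \eqref{def:psi_smallT} is well defined and continuous in $\lambda$; the function $I$ of \eqref{def:I_smallT} is then continuous and nonincreasing, with $I(0)=\int_\Omega K(x)G(p_M(x))\,dx=M|\Omega|$ (since $KG(p_M)=M$) and $I(\lambda)\to 0$ as $\lambda\to\infty$. Because $0<C<M|\Omega|$, the intermediate value theorem furnishes $\lambda^*\geq 0$ with $I(\lambda^*)=C$, and I set $p_0^*(x):=\psi_{x,T}(\lambda^*/K(x))$, which lies in $\mathcal{P}_{0,C,M}$ with the global constraint saturated, as already forced by Lemma \ref{lem:constr_sat}.

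The main step is global optimality. The convex inequality $\phi(q_0)\geq \phi(q_0^*)+\phi'(q_0^*)(q_0-q_0^*)$, strict whenever $q_0(x)\neq q_0^*(x)$, reduces the proof to showing
\begin{equation*}
\int_\Omega K(x)^2\phi'(q_0^*(x))(q_0(x)-q_0^*(x))\,dx\ \geq\ 0.
\end{equation*}
Using the first-order conditions of Lemma \ref{lem:w} in the three regimes of \eqref{def:psi_smallT}, I would establish the pointwise bound $K^2\phi'(q_0^*)(q_0-q_0^*)\geq -2\lambda^* K(q_0-q_0^*)$: on $\{p_0^*=0\}$ one has $q_0-q_0^*\geq 0$ and $w_T(0)\geq -\lambda^*/K$; on $\{p_0^*=p_M\}$ one has $q_0-q_0^*\leq 0$ and $w_T(p_M)\leq -\lambda^*/K$; on $\{0<p_0^*<p_M\}$ the equality $w_T(p_0^*)=-\lambda^*/K$ holds. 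Integrating and using $\int_\Omega K q_0^*\,dx=C\geq \int_\Omega K q_0\,dx$ with $\lambda^*\geq 0$ then yields the integral inequality, so $p_0^*$ is a global minimizer, and strict convexity of $\phi$ forces $p_0=p_0^*$ a.e. for any other minimizer. The delicate point is precisely the case analysis on the two saturation sets, where $w_T(p_0^*)\neq -\lambda^*/K$ and one has to use complementary slackness together with the sign of $q_0-q_0^*$; this is where the strict monotonicity of $w_T$ from Lemma \ref{lem:w_monotonicity} makes every sign fall into place. Finally, even though $\lambda^*$ itself need not be unique (different admissible $\lambda^*$ can occur when the interior region has zero measure), formula \eqref{def:psi_smallT} produces the same $p_0^*$ in all cases.
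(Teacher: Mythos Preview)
Your proof is correct and takes a genuinely different route from the paper's.

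The paper argues entirely through necessary conditions: it invokes Lemma~\ref{lem:w} to say that \emph{any} minimizer must satisfy the three first-order alternatives, then uses the strict monotonicity of $w_T$ (Lemma~\ref{lem:w_monotonicity}) to conclude that, for a given Lagrange multiplier $\lambda^*$, the value $p_0^*(x)$ is pinned down pointwise by \eqref{eq:p0*_smallT}. It then shows that all admissible $\lambda^*$ (those with $I(\lambda^*)=C$) yield the same $p_0^*$. Existence is not argued independently; it is inferred from the fact that the necessary conditions leave a single candidate (cf.\ the discussion in Appendix~\ref{app:Existence}).

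Your approach instead reveals a hidden convex structure. The change of variable $q_0=G(p_0)$ linearizes both the box constraint and the integral constraint, and the identity $\phi'(q)=2w_T(G^{-1}(q))$ turns the strict monotonicity of $w_T$ into strict convexity of the integrand. You then \emph{construct} the candidate via $\psi_{x,T}$ and the intermediate value theorem on $I$, and prove global optimality directly by the tangent-line inequality combined with the complementary-slackness bound $K^2\phi'(q_0^*)(q_0-q_0^*)\geq -2\lambda^* K(q_0-q_0^*)$ on each of the three regimes. This yields existence constructively and uniqueness by strict convexity, without ever assuming a minimizer exists or appealing to the PMP as a necessary condition. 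One small remark: when you write ``using the first-order conditions of Lemma~\ref{lem:w}'', you are really using the \emph{definition} of $\psi_{x,T}$ in \eqref{def:psi_smallT}, which encodes those inequalities by construction; Lemma~\ref{lem:w} itself presupposes an optimal solution, which you do not need. The paper's route is shorter if one is willing to take existence for granted; yours is more self-contained and explains \emph{why} the problem is well-posed in this regime.
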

\begin{proof}
Fix any $x\in\Omega$ and denote 
\[
w(p_{0}):=w_T(p_{0}(x)),\qquad p_{M}=p_{M}(x):=G^{-1}\left(\frac{M}{K(x)}\right)>0.
\]
Let $\lambda^{*}\geq0$ be any value given by Lemma \ref{lem:w} and
assume $p_{0}^{*}=p_{0}^{*}(x)$ is any optimal control solving Problem \eqref{prob:p0}. The optimality conditions
given by Lemma \ref{lem:w} can be rewritten as:
\begin{itemize}
\item If $p_{0}^{*}=0,$ then $w(0)\geq-\frac{\lambda^{*}}{K(x)}$.
\item If $p_{0}^{*}=p_{M},$ then $w(p_{M})\leq-\frac{\lambda^{*}}{K(x)}$.
\item If $0<p_{0}^{*}<p_{M},$ then $w(p_{0}^{*})=-\frac{\lambda^{*}}{K(x)}$.
\end{itemize}
Now, fix $T\leq T_{0}$. To compute the value $p_{0}^{*}=p_{0}^{*}(x)$, first, we look at the function $p_{0}\to w(p_{0})$ for
$0\leq p_{0}\leq p_{M}$. Since, from Lemma \ref{lem:w_monotonicity},
$w$ is strictly increasing, we have $w(p_0)\in [w(0), w(p_M)]$, with $w(0) < w(p_M) < 0$. Now, depending on the value $\lambda^{*}$,
we distinguish three cases:
\begin{itemize}
\item If $-\frac{\lambda^{*}}{K(x)}\leq w(0)$, then necessarily $p_{0}^{*}=0$.
\item If $-\frac{\lambda^{*}}{K(x)}\geq w(p_{M})$, then necessarily $p_{0}^{*}=p_{M}$.
\item If $-\frac{\lambda^{*}}{K(x)}\in(w(0),w(p_{M}))$, then necessarily $p_{0}^{*}=w^{-1}\lrp{-\frac{\lambda^{*}}{K(x)}}$.
\end{itemize}
In other words, for any given $T\leq T_{0}$ and $x\in\Omega$ we have
\begin{equation}
p_{0}^{*}(x)=\psi_{x,T}\lrp{\frac{\lambda^{*}}{K(x)}}=\begin{cases}
0 & \text{if }-\frac{\lambda^{*}}{K(x)}\leq w_{T}(0),\\
p_{M}(x) & \text{if }-\frac{\lambda^{*}}{K(x)}\geq w_{T}(p_{M}(x)),\\
w_{T}^{-1}\lrp{-\frac{\lambda^{*}}{K(x)}} & \text{if }-\frac{\lambda^{*}}{K(x)}\in\left(w_{T}(0),w_{T}(p_{M}(x))\right).
\end{cases}\label{eq:p0*_smallT}
\end{equation}
As a consequence, if $u_{0}^{*}$ is an optimal control, $p_{0}^{*}$
must satisfy (\ref{eq:p0*_smallT}), meaning $p_{0}^{*}$ is uniquely
determined for a given $\lambda^{*}$. We claim that each value $\lambda^*$, given by Lemma \eqref{lem:w}, leads to the same function $p_0^*$, meaning $p_0^*$ is uniquely determined. Consider $I(\lambda)$ as defined in \eqref{def:I_smallT}. If $p_{0}^{*}$ is optimal, then necessarily $I\lrp{\lambda^{*}}=C$,
see Lemma \ref{lem:constr_sat}. Note that the function $\psi_{x,T}$
is clearly continuous and nonincreasing, thus so is $I$. Moreover,
\[
I\lrp{\lambda}=\int_{\Omega}K(x)G(0)dx=0,\qquad\text{if }\lambda\geq\lambda_{max}:=-w_{T}(0)\min_{x}K(x),
\]
\[
I\lrp{\lambda}=\int_{\Omega}K(x)G(p_{M}(x))dx=M|\Omega|,\qquad\text{if }\lambda\leq\lambda_{min}:=-\max_{x}K(x)w_{T}(p_{M}(x)).
\]
Since we assumed $0<C<M|\Omega|$, we deduce that there exist $\lambda_{min}<\lambda_{1}^{*}\leq\lambda_{2}^{*}<\lambda_{max}$
such that 
\[
I\lrp{\lambda}=C\qquad\forall\lambda\in[\lambda_{1}^{*},\lambda_{2}^{*}].
\]
Therefore, $\lambda^{*}\in[\lambda_{1}^{*},\lambda_{2}^{*}]$.
While $\lambda^{*}$ is not uniquely determined, we claim that $\psi_{x,T}$
is constant on $[\lambda_{1}^{*}/K(x),\lambda_{2}^{*}/K(x)]$ for a.e. $x\in\Omega$.
Assume by contradiction that there exists a set $S\subset\Omega$
with positive measure such that $\psi_{x,T}$ is nonconstant on $[\lambda_{1}^{*}/K(x),\lambda_{2}^{*}/K(x)]$
for all $x\in S$. This implies, since $\psi_{x,T}$ is nonincreasing,
\[
\psi_{x,T}\lrp{\frac{\lambda_1^*}{K(x)}}>\psi_{x,T}\lrp{\frac{\lambda_2^*}{K(x)}},\qquad\forall x\in S.
\]
On the other hand, we also have
\[
\psi_{x,T}\lrp{\frac{\lambda_1^*}{K(x)}}\geq\psi_{x,T}\lrp{\frac{\lambda_2^*}{K(x)}},\qquad\forall x\in\Omega.
\]
As a result, since $G$ is increasing, we deduce that 
\begin{align*}
I\lrp{\lambda_1^*}-I\lrp{\lambda_2^*} & =\int_{\Omega\backslash S}K(x)\left(G\lrp{\psi_{x,T}\lrp{\frac{\lambda_1^*}{K(x)}}}-G\lrp{\psi_{x,T}\lrp{\frac{\lambda_2^*}{K(x)}}}\right)\,dx \\
& \quad  +\int_{S}K(x)\left(G\lrp{\psi_{x,T}\lrp{\frac{\lambda_1^*}{K(x)}}}-G\lrp{\psi_{x,T}\lrp{\frac{\lambda_2^*}{K(x)}}}\right)\,dx\\
 & \geq  \int_{S}K(x)\left(G\lrp{\psi_{x,T}\lrp{\frac{\lambda_1^*}{K(x)}}}-G\lrp{\psi_{x,T}\lrp{\frac{\lambda_2^*}{K(x)}}}\right)\,dx \\
 & >0
\end{align*}
where the last inequality follows from the fact that $|S|>0$ and
$K(x)>\min_{\Omega}K>0$. This contradicts the fact that $I\lrp{\lambda_1^*}=I\lrp{\lambda_2^*}=C$.
Therefore, $\psi_{x,T}$ is constant on $[\lambda_{1}^{*}/K(x),\lambda_{2}^{*}/K(x)]$
for a.e. $x\in\Omega$. As a result, $p_{0}^{*}(x)=\psi_{x,T}\lrp{\frac{\lambda^*}{K(x)}}$
is uniquely determined, for any value $\lambda^{*}\in[\lambda_{1}^{*},\lambda_{2}^{*}]$. Moreover, by the definition of $\psi_{x,T}(\cdot)$, we have that $0\leq p_0^*(x)\leq G^{-1}\lrp{\frac{M}{K(x)}}$ a.e., and we already proved that $\int_{\Omega}K(x)G(p_0^*(x))dx=C$, thus $p_0^*\in\mathcal{P}_{0,C,M}$. Since we established that the constructed $p_0^*$ is the only possible solution to Problem \eqref{prob:p0} and $p_0^*\in\mathcal{P}_{0,C,M}$, we conclude not only the existence but also the uniqueness of the solution up to a rearrangement in the parts of the domain $\Omega$ where $K(x)$ is constant.
\end{proof}

\begin{remark}\normalfont{
  Notice that Theorem \ref{theo:TleqT0} implies that releases should be more important where the carrying capacity is high. Since $\lambda^*$ is fixed, the argument of  $\psi_{x,T}(\cdot)$, $\lambda^*/K(x)$, is smaller where $K(x)$ is higher. In case $-\frac{\lambda^*}{K(x)}\not\in\left(w_{T}(0),w_{T}(p_{M}(x))\right)$, we have either $u_0(x)=0$ or $u_0(x)=M$. On the other hand, in case  $-\frac{\lambda^*}{K(x)}\in\left(w_{T}(0),w_{T}(p_{M}(x))\right)$,  $\psi_{x,T}\lrp{\frac{\lambda^*}{K(x)}}=w^{-1}_{T}\lrp{-\frac{\lambda^*}{K(x)}}$. And since $w_{T}(\cdot)$ is monotonically increasing (see Figure \ref{fig:TleqT0}), a bigger $K(x)$ implies a bigger argument (because of the minus sign), which implies a bigger $p_0^*(x)$. Since $u_0^*(x)=K(x)G(p_0^*(x))$, and $G$ is also monotonically increasing, it follows that $u^*_0(x)$ must be non-decreasing when $K(x)$ increases in general, and strictly increasing with $K(x)$ whenever $u_0^*(x)\not\in\left\{0,M\right\}$.}
\end{remark}

\subsection{The case $T > T_{0}$}\label{sec:TgeqT0}

We study now the case $T > T_{0}$. In this case, we have from Lemma \ref{lem:w_monotonicity} that the function $w_T$ is decreasing on $(0,p_0^T)$ and increasing on $(p_0^T,1)$; hence it is not injective. Then we consider its restriction on $(p_0^T,1)$ and consider its inverse $w_T^{-1}:(w_T(p_0^T),0) \to (p_0^T,1)$, which is an increasing function.

In order to state the results in this case it will be useful to introduce some tools and notation. Let us introduce the following mappings
\begin{equation}\label{def:psi0_bigT}
\Lambda\mapsto\psi^0_{x,T}(\Lambda):=\begin{cases}
0 & \text{if }-\Lambda \leq w_{T}(0),\\
p_{M}(x) & \text{if }-\Lambda > \max\lrp{w_{T}(0),w_{T}(p_{M}(x))},\\
w_{T}^{-1}(-\Lambda) & \text{if }-\Lambda\in\left(w_{T}(0),w_{T}(p_{M}(x))\right],
\end{cases}
\end{equation}
the third case only being defined if $w_{T}(0)<w_{T}(p_{M}(x))$, and
\begin{equation}\label{def:psi1_bigT}
\Lambda\mapsto\psi^1_{x,T}(\Lambda):=\begin{cases}
0 & \text{if }-\Lambda < \min\limits_{p_0\in(0,p_M(x))} w_{T}(p_0),\\
p_{M}(x) & \text{if }-\Lambda\geq w_{T}(p_{M}(x)),\\
w_{T}^{-1}(-\Lambda) & \text{if }-\Lambda\in\left[\min\limits_{p_0\in(0,p_M(x))} w_{T}(p_0),w_{T}(p_{M}(x))\right),
\end{cases}
\end{equation}
the third case only being defined if $\min\limits_{p_0\in(0,p_M(x))} w_{T}(p_0)<w_{T}(p_{M}(x))$. It is important to remark that $w^{-1}\lrp{-\frac{\lambda^*}{K(x)}}$ might not be uniquely defined, since the function is not injective on its entire domain. Whenever there is an ambiguity it will be understood that the value of $w^{-1}\lrp{-\frac{\lambda^*}{K(x)}}$ we refer to, is the one on the increasing branch of $w(p_0)$ (the one satisfying $\frac{\partial w}{\partial p_0}(p_0)\geq 0$), since it is the only one satisfying the second order optimality conditions.

For a given value of $\lambda\geq 0$, let us introduce the set:
\begin{equation}\label{def:tilde_Omega}
	\tilde{\Omega}_{\lambda}:=\left\{x\in \Omega \ | \   \psi^0_{x,T}\lrp{\frac{\lambda}{K(x)}}\neq\psi^1_{x,T}\lrp{\frac{\lambda}{K(x)}}  \right\}.
\end{equation}
By definition, for all $x\in\Omega\setminus\tilde{\Omega}_{\lambda}$, $\psi^0_{x,T}\lrp{\frac{\lambda}{K(x)}}=\psi^1_{x,T}\lrp{\frac{\lambda}{K(x)}}$. In order to underline this, for $x\in\Omega\setminus\tilde{\Omega}_{\lambda}$ we will denote $\psi^{\sbullet}_{x,T}\lrp{\frac{\lambda}{K(x)}}:=\psi^0_{x,T}\lrp{\frac{\lambda}{K(x)}}=\psi^1_{x,T}\lrp{\frac{\lambda}{K(x)}}$. Note also that in case $\psi^0_{x,T}\lrp{\frac{\lambda}{K(x)}}\neq\psi^1_{x,T}\lrp{\frac{\lambda}{K(x)}}$, then $\psi^0_{x,T}\lrp{\frac{\lambda}{K(x)}}=0$. Therefore, $\psi^0_{x,T}\lrp{\frac{\lambda}{K(x)}}=0$ for all $x\in\tilde{\Omega}_{\lambda}$. To see this, notice that $\max\lrp{w_{T}(0),w_{T}(p_{M}(x))}\geq w_{T}(p_{M}(x))$, therefore if $\psi^0_{x,T}\lrp{\Lambda}=p_M(x)$, then $\psi^1_{x,T}(\Lambda)=p_M(x)$ too. Also, in case $w_{T}(0)<w_{T}(p_{M}(x))$, then $\left(w_{T}(0),w_{T}(p_{M}(x))\right)\subset \left[\min_{p_0\in(0,p_M(x))} w_{T}(p_0),w_{T}(p_{M}(x))\right)$, therefore if $\psi^0_{x,T}\lrp{\Lambda}=w^{-1}_T(\Lambda)$, then $\psi^1_{x,T}\lrp{\Lambda}=w^{-1}_T(\Lambda)$ too. Finally, if $\Lambda=w_T(p_M(x))$, then $\psi^0_{x,T}\lrp{\Lambda}=w^{-1}_T(w_T(p_M(x)))=p_M(x)=\psi^1_{x,T}\lrp{\Lambda}$.

In the same spirit as in Theorem \ref{theo:TleqT0}, the idea behind Theorem \ref{theo:TgeqT0} is to write the solution in the form $p_0^*(x)=\psi^{\sbullet}_{x,T}\lrp{\frac{\lambda}{K(x)}}$ for certain values of $\lambda$. Therefore, solutions in $\tilde{\Omega}_{\lambda}$ will be hard to characterize in general. In order to study solutions in this set we introduce a secondary problem, the solution of which will allow us to determine the solutions of Problem \eqref{prob:p0} in $\tilde{\Omega}_{\lambda}$. Assuming $|\tilde{\Omega}_{\lambda}|>0$, we introduce the quantity $$\tilde{C}_{\lambda}:=C-\int_{\Omega\setminus\tilde{\Omega}_{\lambda}}K(x)G\lrp{\psi^{\sbullet}_{x,T}\lrp{\frac{\lambda}{K(x)}}}\,dx.$$ 
Assuming $\tilde{C}_{\lambda}>0$ we consider the following problem: 
\paragraph{Secondary problem.}
\begin{equation}\label{prob:shape}\tag{$\mathcal{P}_{\tilde{\Omega}_{\lambda}}$}
\resizebox{0.91\textwidth}{!}{\boxed{
		\inf_{\chi\in \mathcal{X}}\int_{\tilde{\Omega}_{\lambda}}K(x)^2(1-p(T,x))^2\chi(x)+K(x)^2(1-\chi(x)) dx \ , \ \int_{\tilde{\Omega}_{\lambda}}K(x)G\lrp{\psi^1_{x,T}\lrp{\frac{\lambda^*}{K(x)}}}\chi(x) dx \leq \tilde{C}_{\lambda},
}}
\end{equation}
where the new control variable is $\chi_{\lambda}\in\mathcal{X}$ and $$\mathcal{X}:=\left\{\chi_{\lambda}\in L^{\infty}(\tilde{\Omega}_{\lambda}) \ | \  0\leq\chi_{\lambda}\leq 1 \right\}.$$ Here $p(T,x)$ is assumed to have $\psi^1_{x,T}\lrp{\frac{\lambda}{K(x)}}$ as initial condition.

Finally, let us also define
\begin{equation}\label{def:I0_I1}
I^0(\lambda):=\int_{\Omega}K(x)G\lrp{\psi^0_{x,T}\lrp{\frac{\lambda}{K(x)}}}dx \quad \mbox{and} \quad I^1(\lambda):=\int_{\Omega}K(x)G\lrp{\psi^1_{x,T}\lrp{\frac{\lambda}{K(x)}}}dx.
\end{equation}
and
\begin{equation}\label{def:lambda0}
\lambda_0:=\min\left\{\lambda \quad \text{such that} \quad I^0(\lambda)=C\right\},
\end{equation}
\begin{equation}\label{def:lambda1} 
\lambda_1:=\max \left\{\lambda \quad \text{such that} \quad I^1(\lambda)=C\right\}.
\end{equation}
Note that as long as $0<C<M|\Omega|$ these two quantities will always be well defined.

\begin{theorem}\label{theo:TgeqT0}
	Assume $T > T_0$, $0<C<M|\Omega|$ and \eqref{H}. Then, any solution of Problem \eqref{prob:p0}, $p_0^*\in\mathcal{P}_{0,C,M}$, has to satisfy: 
	$$p_0^*(x)=\psi_{x,T}^{\sbullet}\lrp{\frac{\lambda^*}{K(x)}} \text{ for all }  x\in\Omega\setminus\tilde{\Omega}_{\lambda^*}, \mbox{ and } p_0^*(x)=\begin{cases} 
	\psi_{x,T}^{1}\lrp{\frac{\lambda^*}{K(x)}}, & \text{for } x\in\tilde{\Omega}_{\lambda^*} \mbox{ s.t. } \chi_{\lambda^*}^*(x)=1,\\ 
	\psi_{x,T}^0\lrp{\frac{\lambda^*}{K(x)}}, & \text{for } x\in\tilde{\Omega}_{\lambda^*} \mbox{ s.t. } \chi_{\lambda^*}^*(x)=0,\\
	\end{cases}$$
	where $\psi_{x,T}^0\lrp{\frac{\lambda^*}{K(x)}}=0$ for all $x\in\tOm$, $\lambda^*\in[\lambda_0,\lambda_1]$ and $\chi^*_{\lambda^*}(x)$ is the solution to Problem \eqref{prob:shape} with $\lambda=\lambda^*$.
	
	Furthermore, if $|\tilde{\Omega}_{\lambda_0}|=0$, then $\lambda^*=\lambda_0$ and $p_0^*(x)=\psi_{x,T}^{\sbullet}\lrp{\frac{\lambda_0}{K(x)}}$  for all $x\in\Omega$.
\end{theorem}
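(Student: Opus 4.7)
The plan is to build on the first- and second-order optimality conditions from Lemma \ref{lem:w} together with the shape information obtained in Lemma \ref{lem:w_monotonicity}: for $T>T_0$, the switch function $p_0\mapsto w_T(p_0)$ is unimodal on $(0,1)$, strictly decreasing on $(0,p_0^T)$ and strictly increasing on $(p_0^T,1)$. The second-order condition $\partial w_T/\partial p_0\ge 0$ forces any interior optimum (those $x$ with $0<p_0^*(x)<p_M(x)$) to lie on the increasing branch, which is exactly what the notation $w_T^{-1}$ encodes in the definitions of $\psi^0_{x,T}$ and $\psi^1_{x,T}$.

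Fix any optimum $p_0^*$ and an associated multiplier $\lambda^*\ge 0$ furnished by Lemma \ref{lem:w}. Proceeding pointwise in $x$, I would go through the relative positions of $-\lambda^*/K(x)$ with respect to $w_T(0)$, $w_T(p_M(x))$ and $\min_{p_0\in(0,p_M(x))} w_T(p_0)$: when $-\lambda^*/K(x)$ lies strictly above both $w_T(0)$ and $w_T(p_M(x))$, the PMP forces $p_0^*(x)=0$; when it lies strictly below the minimum, it forces $p_0^*(x)=p_M(x)$; in the remaining regimes, the only admissible interior critical value is $w_T^{-1}(-\lambda^*/K(x))$ on the increasing branch. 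The delicate points are precisely those where two distinct values (namely $0$ and $w_T^{-1}(-\lambda^*/K(x))$) are simultaneously PMP-admissible, and these form the set $\tilde\Omega_{\lambda^*}$. Outside of $\tilde\Omega_{\lambda^*}$, the functions $\psi^0_{x,T}$ and $\psi^1_{x,T}$ coincide with the common value $\psi^{\sbullet}_{x,T}(\lambda^*/K(x))$, which then uniquely determines $p_0^*(x)$.

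On $\tilde\Omega_{\lambda^*}$ the PMP does not discriminate between $0$ and $\psi^1_{x,T}(\lambda^*/K(x))$, so I would encode this binary decision by $\chi(x)\in\{0,1\}$. Since $p_0^*$ is optimal for \eqref{prob:p0}, its restriction to $\tilde\Omega_{\lambda^*}$ must also minimize the contribution of $\tilde\Omega_{\lambda^*}$ to the cost, subject to using only the remaining budget $\tilde C_{\lambda^*}$ left over after spending on $\Omega\setminus\tilde\Omega_{\lambda^*}$. Rewriting the cost and the mass constraint on $\tilde\Omega_{\lambda^*}$ in terms of $\chi$, and relaxing $\chi$ to $L^\infty(\tilde\Omega_{\lambda^*};[0,1])$ (which is harmless because the functional and the constraint are linear in $\chi$, so the infimum is attained at an extreme point of the admissible set, i.e.\ at a characteristic function), one recovers exactly Problem \eqref{prob:shape}. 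Thus $\chi^*_{\lambda^*}$ is a solution of that sub-problem.

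It remains to localize $\lambda^*$ and to treat the degenerate case. By Lemma \ref{lem:constr_sat} the global constraint is saturated, and the pointwise characterization yields $\psi^0_{x,T}(\lambda^*/K(x))\le p_0^*(x)\le \psi^1_{x,T}(\lambda^*/K(x))$ a.e., so by monotonicity of $G$ we get $I^0(\lambda^*)\le C \le I^1(\lambda^*)$. Since $\Lambda\mapsto\psi^j_{x,T}(\Lambda)$ is non-increasing for $j=0,1$, the maps $\lambda\mapsto I^j(\lambda)$ are non-increasing, and the definitions \eqref{def:lambda0}--\eqref{def:lambda1} then force $\lambda_0\le\lambda^*\le\lambda_1$. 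For the final statement, $|\tilde\Omega_{\lambda_0}|=0$ implies $\psi^0_{x,T}(\lambda_0/K(x))=\psi^1_{x,T}(\lambda_0/K(x))$ a.e., hence $I^0(\lambda_0)=I^1(\lambda_0)=C$, which pins $\lambda^*=\lambda_0$ and $p_0^*(x)=\psi^{\sbullet}_{x,T}(\lambda_0/K(x))$ a.e.\ on $\Omega$. The main obstacle I expect is the rigorous decoupling into Problem \eqref{prob:shape}: one has to verify that the PMP multiplier $\lambda^*$ is consistent with the value of $\tilde C_{\lambda^*}$ entering the sub-problem, and that the passage from the binary choice $\chi\in\{0,1\}$ to the relaxation $\chi\in[0,1]$ does not lose information at the level of the pointwise characterization.
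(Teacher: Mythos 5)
Your overall route is the same as the paper's: exploit Lemma \ref{lem:w} pointwise together with the unimodality of $w_T$ from Lemma \ref{lem:w_monotonicity}, sandwich $\psi^0_{x,T}\lrp{\frac{\lambda^*}{K(x)}}\le p_0^*(x)\le \psi^1_{x,T}\lrp{\frac{\lambda^*}{K(x)}}$ to get $\lambda^*\in[\lambda_0,\lambda_1]$ via saturation (Lemma \ref{lem:constr_sat}), and reduce the ambiguity set $\tilde{\Omega}_{\lambda^*}$ to the linear sub-problem \eqref{prob:shape} (your ``restriction of the optimum plus extreme-point/bang-bang'' step is an acceptable variant of the paper's bathtub-principle argument). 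However, your pointwise case analysis is stated backwards, and this is the heart of the characterization. Since $w_T<0$, the regime where $-\frac{\lambda^*}{K(x)}$ lies above both $w_T(0)$ and $w_T(p_M(x))$ is the weakly-constrained regime and forces $p_0^*(x)=p_M(x)$, not $0$: indeed $p_0^*(x)=0$ would require $w_T(0)\ge-\frac{\lambda^*}{K(x)}$ by Lemma \ref{lem:w}, which is violated there, while no interior point on the increasing branch can satisfy $w_T(p_0^*)=-\frac{\lambda^*}{K(x)}$. Symmetrically, $-\frac{\lambda^*}{K(x)}<\min_{p_0\in(0,p_M(x))}w_T(p_0)$ forces $p_0^*(x)=0$, not $p_M(x)$. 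As written, your two extreme cases contradict the very definitions \eqref{def:psi0_bigT}--\eqref{def:psi1_bigT} you then rely on. Also, the ambiguous pair on $\tilde{\Omega}_{\lambda^*}$ is not always $\lrk{0,w_T^{-1}\lrp{-\frac{\lambda^*}{K(x)}}}$; it can be $\lrk{0,p_M(x)}$ (cases A and B of Figure \ref{fig:TgeqT0}), which is why the correct statement is $p_0^*(x)\in\lrk{0,\psi^1_{x,T}\lrp{\frac{\lambda^*}{K(x)}}}$. These slips are fixable, but must be corrected for the theorem's pointwise description to come out right.

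A second, genuine gap is the final statement. From $|\tilde{\Omega}_{\lambda_0}|=0$ you correctly get $I^0(\lambda_0)=I^1(\lambda_0)=C$, but this by itself does not ``pin'' $\lambda^*=\lambda_0$: the interval $[\lambda_0,\lambda_1]$ may be nondegenerate, so solutions attached to $\lambda^*>\lambda_0$ must still be excluded. The paper does this through monotonicity: the candidate $\psi^{\sbullet}_{x,T}\lrp{\frac{\lambda_0}{K(x)}}$ is admissible, pointwise dominates any candidate associated with $\lambda\ge\lambda_0$ (the maps $\psi^j$ are nonincreasing in $\lambda$), and the cost decreases with the initial datum. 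Alternatively, you can complete your own line: $I^1$ is nonincreasing with $I^1(\lambda_0)=I^1(\lambda_1)=C$, hence $I^1\equiv C$ on $[\lambda_0,\lambda_1]$; then saturation of the budget together with $p_0^*\le\psi^1_{x,T}\lrp{\frac{\lambda^*}{K}}$ a.e. and strict monotonicity of $G$ (and $K>0$) forces $p_0^*=\psi^1_{x,T}\lrp{\frac{\lambda^*}{K}}=\psi^{\sbullet}_{x,T}\lrp{\frac{\lambda_0}{K}}$ a.e. Either way, a justification is needed where you only assert. Two minor points: you should check $\tilde{C}_{\lambda^*}>0$ before posing \eqref{prob:shape}, and note that existence of a minimizer of the relaxed sub-problem (weak-$*$ compactness plus linearity) is what lets you invoke the extreme-point argument.
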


\begin{proof}

In this proof we use the same notation as in the proof of Theorem \ref{theo:TleqT0}. Let us fix any $x\in\Omega$ and let $\lambda^{*}\geq0$ be any value given by Lemma \ref{lem:w}. Let  $p_0^*\in\mathcal{P}_{0,C,M}$ be the solution of Problem \eqref{prob:p0}.

Using the first and second order optimality conditions (see Lemma \eqref{lem:w}) we know that there exists a $\lambda^*\geq 0$ such that the optimal control $p_0^*$ must satisfy:
\begin{itemize}
	\item If $p_{0}^{*}=0$ then $w(0)\geq-\frac{\lambda^*}{K(x)}$.
	\item If $p_{0}^{*}=p_{M}$ then $w(p_{M})\leq-\frac{\lambda^*}{K(x)}$.
	\item If $0<p_{0}^{*}<p_{M}$ then $w(p_{0}^{*})=-\frac{\lambda^*}{K(x)}$ and $\frac{\partial w}{\partial p_0}(p_0^*)\geq 0$.
\end{itemize}

We fix $T>T_0$ and exploit these optimality conditions. Under hypothesis \eqref{H}, by Lemma \ref{lem:w_monotonicity}, $w(p_0)$ is unimodal, that is, strictly decreasing until a certain $p_0^T\in(0,\bar{\theta})$ and then strictly increasing. Depending on the relative position of $w(0)$,$w(p_M)$ and $\min w(p_0)$ we can have three different behaviors of the optimal control. We detail here as an example the case $w(0)\geq w(p_M)>\min w(p_0)$. Cases $w(0)>w(p_M)\geq \min w(p_0)$ and $w(p_M)>w(0)>\min w(p_0)$ can be deduced from this analysis and Figure \ref{fig:TgeqT0}.

Assume $p_M$ is such that $w(0)\geq w(p_M)>\min w(p_0)$ (case B in Figure \ref{fig:TgeqT0}), then we distinguish four cases:
\begin{itemize}
	\item If $-\frac{\lambda^*}{K(x)}\geq w(0)$, then necessarily $p_{0}^{*}=p_M$.
	\item If $w(0) \geq -\frac{\lambda^*}{K(x)}\geq w(p_{M})$, then necessarily $p_{0}^{*}\in\lrk{0,p_{M}}$.
	\item If $w(p_M)\geq -\frac{\lambda^*}{K(x)}\geq \min w(p_0)$, then necessarily $p_{0}^{*}\in\lrk{0,w^{-1}\lrp{-\frac{\lambda^*}{K(x)}}}$.
	\item If $-\frac{\lambda^*}{K(x)} \leq \min w(p_0)$, then necessarily $p_{0}^{*}=0$.
\end{itemize}

Two things remain to be studied: The values that $\lambda^*$ can take, and, in case $p_0^*$ is not uniquely determined, how to choose between the two options. In order to do this, let us consider mappings $\psi_{x,T}^0$ and $\psi_{x,T}^1$ as defined in \eqref{def:psi0_bigT} and \eqref{def:psi1_bigT} respectively. Note that these two mappings are always well defined and they give, respectively, the minimum and maximum values $p_0^*$ can take when two values of $p_0$ satisfy the optimality conditions. For instance, if for a given value of $\lambda^*$, $p_0^*\in\lrk{0,p_M}$, then $\psi^0_{x,T}\lrp{\frac{\lambda^*}{K(x)}}=0$ and $\psi^1_{x,T}\lrp{\frac{\lambda^*}{K(x)}}=p_M$. Remark also that whenever $\psi^0_{x,T}\lrp{\frac{\lambda}{K(x)}}\neq\psi^1_{x,T}\lrp{\frac{\lambda}{K(x)}}$, $\psi^0_{x,T}\lrp{\frac{\lambda}{K(x)}}=0$.

Mappings $\psi^0_{x,T}\lrp{\frac{\lambda}{K(x)}},\psi^1_{x,T}\lrp{\frac{\lambda}{K(x)}}$ are non-increasing with respect to $\lambda$. Although in case $T>T_0$, $\psi^0_{x,T}$ and $\psi^1_{x,T}$ are only continuous in case C (see Figure \ref{fig:TgeqT0}), it still holds that for all $\lambda\geq 0$ we have that $I^0(\lambda)\leq I^1(\lambda)$, with $I^0(\lambda), I^1(\lambda)$, as defined in \eqref{def:I0_I1}. Furthermore, if $\lambda^*$ is any value given by Lemma \ref{lem:w} we have that
\begin{equation} \label{I0_KG_I1}
	I^0(\lambda^*)\leq \int_{\Omega}K(x)G(p_0^*(x))dx \leq I^1(\lambda^*).
\end{equation}
Using Lemma \ref{lem:constr_sat}, \eqref{I0_KG_I1} means that $I^0(\lambda^*)\leq C \leq I^1(\lambda^*)$. It follows that $\lambda^*\in[\lambda_{0},\lambda_{1}]$. 

Fixing now a $\lambda^*\in[\lambda_{0},\lambda_{1}]$, let us consider the set $\tilde{\Omega}_{\lambda^*}$ as defined in \eqref{def:tilde_Omega}. Note that if $|\tilde{\Omega}_{\lambda^*}|=0$, we can conclude that $p_0^*(x)=\psi^{\sbullet}_{x,T}\lrp{\frac{\lambda^*}{K(x)}}$ using the same arguments as in the proof of Theorem \ref{theo:TleqT0}. In particular, if $|\tilde{\Omega}_{\lambda_0}|=0$, since $p_0^*(x)=\psi^{\sbullet}_{x,T}\lrp{\frac{\lambda_0}{K(x)}}$, $\psi^{\sbullet}_{x,T}(\lambda)$ is non-increasing w.r.t $\lambda$ and $\int_{\Omega}K(x)^2\lrp{1-p(T,x)}^2 \,dx$ is descreasing w.r.t to the initial data of $p(T,x)$, we can conclude that $\lambda^*=\lambda_0$, proving the last statement of the Theorem. 

For the rest of the proof we assume that $|\tilde{\Omega}_{\lambda^*}|>0$. Note also that the optimal control $p_0^*$ must be such that $0 < \tilde{C}_{\lambda^*}\leq C$. We consider Problem \eqref{prob:shape} with $\lambda=\lambda^*$. Observing that the criterion to optimize is affine with respect to $\chi_{\lambda^*}$ and that its differential at $\chi_{\lambda^*}^*$ is the linear mapping
$$
L^\infty(\Omega) \owns h\mapsto  \int_{\tOm} h \ K(x)\left(K(x)p(T,x)(p(T,x)-2) + \tilde{\lambda}G\lrp{\psi^1_{x,T}\lrp{\frac{\lambda^*}{K(x)}}}\right) \ dx,
$$
leads to introduce the switch function $\Phi$ for this problem, namely 
$$
\Phi(x):=K(x) \ \frac{p(T,x)(2-p(T,x))}{G\lrp{\psi^1_{x,T}\lrp{\frac{\lambda^*}{K(x)}}}}.
$$

We infer from the so-called bathtub principle (see e.g. Section 1.14 of \cite{Bathtub}) the existence of a unique real number $\tilde{\lambda}^*$ such that
$$
\{\tilde{\lambda}^*> \Phi\}\subset \left\{\chi_{\lambda^*}^*=0\right\}\subset \{\tilde{\lambda}^*\geq \Phi\}, \qquad \{\tilde{\lambda}^*< \Phi\}\subset \left\{\chi_{\lambda^*}^*=1\right\}\subset \{\tilde{\lambda}^*\leq \Phi\}
$$
and furthermore, $\left\{0<\chi_{\lambda^*}^*<1\right\}\subset \{\tilde{\lambda}^*= \Phi\}$. Note that such inclusions must be understood up to a zero Lebesgue measure set.

Let us denote $D:=\left\{x\in\tOm \ | \ 0<\chi_{\lambda^*}^*(x)<1\right\}$. In case $|D|=0$, the optimality conditions become necessary and sufficient. The solution can be written as $$\chi^*_{\lambda^*}(x)=\begin{cases} 1 &, \text{if } \tilde{\lambda}^* < \Phi,\\
0 &, \text{if } \tilde{\lambda}^* > \Phi.\\
\end{cases}$$

In case $|D|>0$, since the problem is linear in $\chi_{\lambda^*}$ we know that there exists a bang-bang solution. That is, a solution that only takes the values $\chi_{\lambda^*}^*=0$ and $\chi_{\lambda^*}^*=1$. This means that despite it may exist a solution with $0<\chi_{\lambda^*}^*(x)<1$ for $x\in D$, we can always construct a bang-bang alternative that performs just as good.
Assuming $\tilde{\lambda}^*=\Phi$ in $D$ ($\Phi$ is constant in $D$ by definition), we introduce $$\chi^{\alpha}_{\lambda^*}(x)=\begin{cases} 1, & \text{if } \tilde{\lambda}^* < \Phi \text{ or } x \in D^{\alpha},\\
0, & \text{if } \tilde{\lambda}^* > \Phi \text{ or } x \in D \setminus D^{\alpha}.
\end{cases}$$
where $D^{\alpha}$ is any subset of $D$ such that $|D^{\alpha}|/|D|=\alpha$, and $\alpha\in[0,1]$. To compute the value of $\alpha$ we use one more time Lemma \ref{lem:constr_sat}, concluding that, in this case, $\chi_{\lambda^*}^*(x)=\chi_{\lambda^*}^{\alpha}(x)$ for $\alpha$ such that $\int_{\tilde{\Omega}_{\lambda^*}}K(x)G\lrp{\psi^1_{x,T}\lrp{\frac{\lambda^*}{K(x)}}}\chi^{\alpha}_{\lambda^*}(x) dx = \tilde{C}_{\lambda^*}$.
Note how the solution to this secondary problem, \eqref{prob:shape}, sheds light on the primary problem, \eqref{prob:p0}, by allowing us to write the solution on $\tOm$ as 
$$
p_0^*(x)=\begin{cases} 
\psi^1_{x,T}(\lambda^*), & \mbox{ on } \chi_{\lambda^*}^*(x)=1,\\ 
\psi^0_{x,T}(\lambda^*), & \text{ on } \chi_{\lambda^*}^*(x)=0,\\
\end{cases}
$$
concluding the proof.
\end{proof}
\begin{remark}\normalfont{
    We stress that in Theorem \ref{theo:TgeqT0} we do not establish the existence of solutions for Problem \eqref{prob:p0} in all generality. Comments on the existence of solutions for this case ($T>T_0$) are made in Appendix \ref{app:Existence}. Additionally, the existence of solutions in case $K$ is a piece-wise constant function is proven.
	}
\end{remark}

The last result concerns the particular case where $K$ is a constant.
\begin{corollary}\label{coro:solKconstant}
   
    Assume $0<C<M|\Omega|$ and $K(x)=K$ constant in all $\Omega$. Then there exists a $p_0^*\in\mathcal{P}_{0,C,M}$ that solves Problem \eqref{prob:p0}.

    \begin{itemize}
        \item If $T\leq T_0$, it is given by $$p_0^*(x)=G^{-1}\lrp{\frac{C}{K|\Omega|}} \text{ for all }  x\in\Omega.$$  
        \item If $T>T_0$,
        \begin{itemize}
        	\item If $w(0)<w\lrp{G^{-1}\lrp{\frac{C}{K|\Omega|}}}$, then $$p_0^*(x)=G^{-1}\lrp{\frac{C}{K|\Omega|}} \text{ for all }  x\in\Omega$$
        	\item If $w(0) \geq w\lrp{G^{-1}\lrp{\frac{C}{K|\Omega|}}}$, then there exists at least one $\lambda^*\in[\lambda_0,\lambda_1]$ such that $p_0^*(x)$ can be written as $$p_0^*(x)=\begin{cases} 
        	\psi_{x,T}^1\lrp{\frac{\lambda^*}{K}}, & \text{for } x\in D,\\ 
        	0, & \text{for } x\in \Omega\setminus D,\\
        	\end{cases}$$
        	where $D$ can be any subdomain of $\Omega$ with size $|D|=\frac{C}{KG\lrp{\psi_{x,T}^1\lrp{\frac{\lambda^*}{K}}}}$.
        \end{itemize} 
       
    \end{itemize}
    
\end{corollary}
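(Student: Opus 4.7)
I would specialize Theorems~\ref{theo:TleqT0} and \ref{theo:TgeqT0} to the case $K(x)\equiv K$ constant, exploiting the fact that the maps $\psi_{x,T}$, $\psi^{0}_{x,T}$, $\psi^{1}_{x,T}$ depend on $x$ only through $K(x)$. When $K$ is constant these maps lose their $x$-dependence, so the spatial structure of the optimum collapses to either a single constant value on $\Omega$ or a partition of $\Omega$ into two level sets. The integral constraint $\int_\Omega K\,G(p_0^*)\,dx=C$ from Lemma~\ref{lem:constr_sat} then pins down the remaining scalar degree of freedom.

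The case $T\leq T_{0}$ is immediate from Theorem~\ref{theo:TleqT0}: the unique optimum $p_{0}^{*}(x)=\psi_{x,T}(\lambda^{*}/K)$ does not depend on $x$, and the saturated constraint $KG(p_{0}^{*})|\Omega|=C$ yields $p_{0}^{*}\equiv G^{-1}(C/(K|\Omega|))$.

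For $T>T_{0}$ I set $q^{*}:=G^{-1}(C/(K|\Omega|))$, the unique constant profile satisfying the integral constraint. By Lemma~\ref{lem:w_monotonicity}, $w$ is strictly decreasing on $(0,p_{0}^{T})$ and strictly increasing on $(p_{0}^{T},1)$. If $w(0)<w(q^{*})$, the strict decrease of $w$ on $(0,p_{0}^{T})$ forces $q^{*}\in(p_{0}^{T},p_{M})$; in particular $q^{*}$ lies on the increasing branch of $w$, so the second-order condition holds. Choosing $\lambda^{*}:=-Kw(q^{*})>0$ places $-\lambda^{*}/K=w(q^{*})$ strictly between $w(0)$ and $w(p_{M})$ (using $q^{*}<p_{M}$, guaranteed by $C<M|\Omega|$), which is precisely the third branch in both \eqref{def:psi0_bigT} and \eqref{def:psi1_bigT}. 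Hence $\psi^{0}_{x,T}(\lambda^{*}/K)=\psi^{1}_{x,T}(\lambda^{*}/K)=q^{*}$, so $\tilde{\Omega}_{\lambda^{*}}$ has measure zero and Theorem~\ref{theo:TgeqT0} delivers $p_{0}^{*}(x)=q^{*}$ throughout $\Omega$.

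If instead $w(0)\geq w(q^{*})$, then $q^{*}$ either sits on the decreasing branch of $w$ or is below the level $w(0)$, so no choice of $\lambda^{*}$ makes $q^{*}$ an interior candidate compatible with the second-order condition; hence no spatially constant profile can be optimal. For any admissible $\lambda^{*}\in[\lambda_{0},\lambda_{1}]$ selected via the one-dimensional minimization of Theorem~\ref{theo:TgeqT0} one then has $\psi^{0}(\lambda^{*}/K)=0\neq\psi^{1}(\lambda^{*}/K)$, so $\tilde{\Omega}_{\lambda^{*}}=\Omega$. The secondary problem \eqref{prob:shape} is posed on $\Omega$ with spatially constant data $K$, $\psi^{1}_{x,T}(\lambda^{*}/K)$, and the resulting $p(T,x)$; the bathtub argument used in the proof of Theorem~\ref{theo:TgeqT0} yields a bang-bang $\chi^{*}_{\lambda^{*}}=\mathbf{1}_{D}$, and both the cost integral and the constraint integral now depend on $D$ only through its Lebesgue measure. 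The constraint forces $|D|=C/(KG(\psi^{1}_{x,T}(\lambda^{*}/K)))$, and any $D\subset\Omega$ of this measure yields the same optimal cost.

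The main subtlety I expect is in the first sub-case of $T>T_0$: one must verify that when $w(0)<w(q^{*})$ the natural multiplier $\lambda^{*}=-Kw(q^{*})$ lands precisely in the coincidence range for $\psi^{0}$ and $\psi^{1}$, so that Theorem~\ref{theo:TgeqT0} does not force activation of the bang-bang mechanism on $\tilde{\Omega}_{\lambda^{*}}$. This reduces to a careful comparison of the three-branch definitions \eqref{def:psi0_bigT}, \eqref{def:psi1_bigT} against the unimodal graph of $w$, which is short but where all the combinatorics of the cases $w(0)\lessgtr w(p_M)$ and the position of $q^*$ versus $p_0^T$ have to be checked consistently.
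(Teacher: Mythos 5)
Your overall route is the paper's: specialize Theorems~\ref{theo:TleqT0} and \ref{theo:TgeqT0}, use that for constant $K$ the maps $\psi_{x,T}$, $\psi^0_{x,T}$, $\psi^1_{x,T}$ lose their $x$-dependence, and let the saturated constraint of Lemma~\ref{lem:constr_sat} fix the remaining scalar. The case $T\leq T_0$ and the first sub-case of $T>T_0$ are fine; your direct check that $\lambda^*=-Kw(q^*)$ lands in the coincidence range of \eqref{def:psi0_bigT}--\eqref{def:psi1_bigT} is exactly the content of the paper's equivalence $|\tilde{\Omega}_{\lambda_0}|=0 \Leftrightarrow w(0)<w\lrp{G^{-1}\lrp{\frac{C}{K|\Omega|}}}$, just run in the opposite direction.

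The genuine problem is in your second sub-case. The assertion that for $w(0)\geq w(q^*)$ ``no choice of $\lambda^*$ makes $q^*$ an interior candidate compatible with the second-order condition, hence no spatially constant profile can be optimal'' is false: if $q^*\geq p_0^T$ (perfectly compatible with $w(q^*)\leq w(0)$, e.g.\ $q^*$ slightly above the minimizer $p_0^T$ of $w$), then $q^*$ lies on the increasing branch, so with $\lambda^*=-Kw(q^*)$ it satisfies both $w(q^*)=-\lambda^*/K$ and $\frac{\partial w}{\partial p_0}(q^*)\geq 0$. What $w(q^*)\leq w(0)$ really encodes is that the value $0$ is then \emph{also} consistent with the first-order conditions at that multiplier, i.e.\ $\psi^0_{x,T}(\lambda^*/K)=0\neq q^*=\psi^1_{x,T}(\lambda^*/K)$ and $\tilde{\Omega}_{\lambda^*}=\Omega$; the pointwise optimality conditions cannot discard the constant profile, and indeed it is recovered inside the corollary's formula as the choice $D=\Omega$ with $|D|=|\Omega|=C/(KG(q^*))$. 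The paper never claims the constant profile is suboptimal here; it only uses $\tilde{\Omega}_{\lambda^*}=\Omega$ together with constraint saturation to obtain the two-valued form and the measure of $D$. Your next step, ``for any admissible $\lambda^*\in[\lambda_0,\lambda_1]$ one has $\psi^0=0\neq\psi^1$'', is the right statement to aim for, but it should be justified on its own (for instance: a multiplier with $-\lambda^*/K>w(0)$ would make $\psi^0=\psi^1$, hence a constant solution $q^*$ with $w(q^*)=-\lambda^*/K>w(0)$, contradicting the sub-case hypothesis) rather than deduced from the false exclusion of interior candidates. With that repair, the remainder (bathtub/bang-bang for the secondary problem with spatially constant data, cost and constraint depending on $D$ only through $|D|$) coincides with the paper's argument; like the paper, you treat existence only through the necessary-conditions characterization, so no extra penalty there.
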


\begin{proof}

The existence of a unique solution written as $p_0^*(x)=\psi_{x,T}\lrp{\frac{\lambda^*}{K(x)}}$ for the case $T\leq T_0$ can be easily adapted from the proof of Theorem \ref{theo:TleqT0}. Since the constraint must be saturated, we have that $$\int_{\Omega}K(x)G\lrp{\psi_{x,T}\lrp{\frac{\lambda^*}{K(x)}}}\,dx=K\int_{\Omega}G\lrp{\psi_{x,T}\lrp{\frac{\lambda^*}{K}}}\,dx=C,$$ but for $K$ constant, $w$ is constant w.r.t. $x$, and thus, so is $\psi_{x,T}\lrp{\frac{\lambda^*}{K}}$. Therefore, $$K\int_{\Omega}G\lrp{\psi_{x,T}\lrp{\frac{\lambda^*}{K}}}\,dx=KG\lrp{\psi_{x,T}\lrp{\frac{\lambda^*}{K}}}|\Omega|=C,$$
which yields $$p_0^*(x)=\psi_{x,T}\lrp{\frac{\lambda^*}{K}}=G^{-1}\lrp{\frac{C}{K|\Omega|}}.$$

The case $T>T_0$ is greatly simplified in this setting . Since $\psi_{x,T}\lrp{\frac{\lambda^*}{K}}$ is constant w.r.t. $x$, either $|\tilde{\Omega}_{\lambda}|=0$, or $\tilde{\Omega}_{\lambda}=\Omega$. Like in the general case, if $|\tilde{\Omega}_{\lambda_0}|=0$ we have that $p_0^*(x)=\psi^{\sbullet}_{x,T}\lrp{\frac{\lambda_0}{K}}=G^{-1}\lrp{\frac{C}{K|\Omega|}}$, and by the monotonicity of $\int_{\Omega}(1-p(T,x))^2\,dx$ w.r.t to the initial condition of $p(T,x)$ we can conclude. On the other hand, in this case, we can put the condition $|\tilde{\Omega}_{\lambda_0}|=0$ in simpler terms. If $|\tilde{\Omega}_{\lambda_0}|=0$, then $\psi_{x,T}^0\lrp{\frac{\lambda_0}{K}}=\psi_{x,T}^1\lrp{\frac{\lambda_0}{K}}$. Looking at the two functions, this happens if and only if $w(0)<-\frac{\lambda_0}{K}$ and we have that $$w(0)<-\frac{\lambda_0}{K}=w(p_0^*(x))=w\lrp{\psi^{\sbullet}_{x,T}\lrp{\frac{\lambda_0}{K}}}=w\lrp{G^{-1}\lrp{\frac{C}{K|\Omega|}}}.$$ Therefore, $$|\tilde{\Omega}_{\lambda_0}|=0 \text{ if and only if } w(0)< w\lrp{G^{-1}\lrp{\frac{C}{K|\Omega|}}}.$$

In case $ w(0)\geq w\lrp{G^{-1}\lrp{\frac{C}{K|\Omega|}}}$, we have that $|\tilde{\Omega}_{\lambda_0}|>0$ and thus, $|\tOm|>0$. Furthermore, since $K$ is constant, $\tOm=\Omega$.

Fixing a value for $\lambda^*$, $p_0^*(x)$ can only take two values in $\Omega$, $p_0^*(x)=0$ or $p_0^*(x)=\psi_{x,T}^1(\frac{\lambda^*}{K})$. Therefore, from Lemma \ref{lem:constr_sat} we can directly deduce the size of the domain where $p_0^*(x)=\psi_{x,T}^1\lrp{\frac{\lambda^*}{K}}$, that we denote $D$. We have $$\int_{\Omega}KG(p_0^*(x))\,dx=\int_{D}KG\lrp{\psi_{x,T}^1\lrp{\frac{\lambda^*}{K}}}\,dx=|D|KG\lrp{\psi_{x,T}^1\lrp{\frac{\lambda^*}{K}}}=C,$$
thus, $$|D|=\frac{C}{KG(\psi_{x,T}^1\lrp{\frac{\lambda^*}{K}})}.$$ Therefore, the solution can be written as $\displaystyle p^*_0(x)=\psi_{x,T}^1\lrp{\frac{\lambda^*}{K}} \text{ a.e. on } D,$ and $p_0^*(x)=0$ elsewhere, with $|D|=\frac{C}{KG\lrp{\psi_{x,T}^1\lrp{\frac{\lambda^*}{K}}}}$.

\end{proof}

\section{Numerical Implementation of results}\label{sec:numeric}

Thanks to Theorems \ref{theo:TleqT0} and \ref{theo:TgeqT0} we can implement an algorithm for computing solutions to problems \eqref{prob:u0} and \eqref{prob:p0}. In the case $T\leq T_0$, the computations will be a direct application of the results presented in Theorem \ref{theo:TleqT0}, where solutions are unique up to a rearrangement. In the case $T>T_0$, Theorem \ref{theo:TgeqT0} allows us to significantly simplify the problem by recasting it as a one-dimensional one when solutions cannot be found directly. Namely, 
\begin{equation}\label{prob:lambda}\tag{$\mathcal{Q}$}
\boxed{
	\inf_{\lambda\in[\lambda_0,\lambda_1]} \int_\Omega K(x)^2 (1-p(T,x))^2\,dx
}
\end{equation}
where we assume that the initial condition for $p(T,x)$ is given by the optimal release strategy, $p_0^*(x)$, given by theorem \ref{theo:TgeqT0}, assuming that $\lambda=\lambda^*$. Following the lines of \cite{DHP}, we present the simulations in the context of mosquito population replacement using \textit{Wolbachia}, although as discussed in Section \ref{sec:Intro}, the case with diffusion is more relevant in this setting. The parameters used for the simulations are presented in Table \ref{tab:Parameters}.

\begin{table}[htb!]
	\centering
	\begin{tabular}{|c|c|c|c|}
		\hline
		Category & Parameter & Name & Value \\
		\hline 
		\multirow{5}{*}{Optimization} &  $T$ & Final time & $\left\{1,25\right\}$ \\
		\cline{2-4}
		&  $M$ & Maximal instantaneous release rate & $250$ \\ 
		\cline{2-4}
		&  $C$ & Amount of mosquitoes avaliable  & $\left\{30,200\right\}$  \\
		\cline{2-4}
		&  $|\Omega|$ & Domain size  & $1$  \\
		\cline{2-4}
		& $K_0$ & Average carrying capacity & 100 \\
		\hline
		\multirow{5}{*}{Biology}&  $b_1^0$ & Normalized wild birth rate & 1 \\
		\cline{2-4}
		& $b_2^0$ & Normalized infected birth rate & 0.9 \\
		\cline{2-4}
		& $d_1$ & Wild death rate & 0.27 \\ 
		\cline{2-4}
		& $d_2$ & \textit{Wolbachia} infected death rate & 0.3 \\
		\cline{2-4}
		& $s_h$ & Cytoplasmic incompatibility level & 0.9 \\
		\hline
	\end{tabular}
	\caption{Values of the parameters used in simulations.  The values for the biological parameters have been taken from \cite{APSV}.}
	\label{tab:Parameters}
\end{table}

\subsection{1D simulations}\label{sec:1D_Sims}

The simplest setting in which we can study the problem is considering only one spatial dimension. We present two examples of the solutions obtained exploiting the results proven in this paper in a 1D setting. We consider the following functions representing the carrying capacity of the environment
\begin{equation}\label{K_simulations}
	K_S(x)=K_0\lrp{1-\frac{1}{2}\cos\lrp{\frac{2\pi x}{|\Omega|}}}, \quad K_P(x)=\frac{3K_0}{2}\mathbbm{1}_{\left[0,|\Omega|/2\right]}+\frac{K_0}{2}\mathbbm{1}_{\left(|\Omega|/2,|\Omega|\right]}
\end{equation}

With the parameters considered, the two functions have the same average carrying capacity, $K_0$. That is, $\int_{\Omega}K_S(x)\,dx=\int_{\Omega}K_P(x)\,dx=K_0|\Omega|$, which is also the value we would obtain in case of an homogeneous carrying capacity equal to $K_0$ in all the domain. The domain considered is $\Omega=[0,|\Omega|]$. We chose these two functions in order to have a piece-wise constant function and a function that is non-constant in any positive measure interval for comparison.

For the parameters in Table \ref{tab:Parameters}, $A(\cdot)$, as defined in \eqref{def:A}, satisfies hypothesis \eqref{H}. The time when the function stops being increasing and starts being unimodal (decreasing, then increasing) is $T_0\approx 3.51$, computed using formula \eqref{eq:T0}. We choose to show the results for a time smaller than $T_0$ and a time bigger than $T_0$, so both behaviors can be observed. The simulations have been performed using an ad hoc algorithm exploiting the results of Theorems \ref{theo:TleqT0} and \ref{theo:TgeqT0}.

In Figure \ref{fig:K_sin} we can see the results for $K(\cdot)=K_S(\cdot)$. This choice models a scenario where mosquitoes are concentrated in the center of the domain studied and their concentration fades out as we move towards the boundaries. In the case $C=30$ (left column), we observe how the optimal strategy flattens and widens as $T$ increases. To understand this effect, we recall that in case $p_0(x)<\theta$, $p(t,x)$ is decreasing with respect to $t$. In other words, the \textit{Wolbachia}-infected mosquitoes tend to be replaced by wild mosquitoes if they don't surpass a critical threshold. Furthermore, if $p_0(x)>\theta$, $p(t,x)$ is increasing with respect to $t$, and therefore \textit{Wolbachia}-infected mosquitoes take over the population without further intervention. For the parameters considered we have $\theta\approx 0.21$ (The green dash-dotted line in Figure \ref{fig:K_sin}). According to our interpretation, since for $T$ small, $p(T,x)$ is close to its initial condition, a value of $p_0(x)$ below the threshold does not impact greatly the final result and having a bigger initial proportion in places where $K(x)$ is higher is favored. On the other hand, when $T$ is big, $p(T,x)$ can be far from its initial condition. Hence, there is an incentive for $p_0^*(x)$ to be above $\theta$, since the proportion of \textit{Wolbachia}-infected mosquitoes in the parts of the domain above $\theta$ will naturally increase with time. This also explains why the end of the release interval is abrupt. If the release is not going to achieve the critical proportion, it is better not to release. This effect can be seen clearly in the bottom-left graph and should be more pronounced the bigger $T$ is.

The changes between $T=1$ and $T=25$ in the case $C=200$ are imperceptible. A possible explanation is that in this case $p_0^*(x)>\theta$ for all $x\in\Omega$ already in the first case. Therefore the proportion of \textit{Wolbachia}-infected mosquitoes is going to increase with time everywhere. This case illustrates how $u^*_0(\cdot)$  is non-decreasing as $K(\cdot)$ increases, and how that is not necessarily the case of $p_0^*(\cdot)$, which decreases when $K(\cdot)$ increases if $p_0^*(\cdot)=p_M(\cdot)$. We recall that we have proven this monotonicity property for the case $T \leq T_0$ and in the case $T>T_0$ but $|\tilde{\Omega}_{\lambda_0}|=0$, which is the case in the bottom-right graph ($C=200$, $T=25$). The only case where  $|\tilde{\Omega}_{\lambda_0}|>0$ in Figure \ref{fig:K_sin} is the one with $C=30$, $T=25$ (bottom-left graph).

\begin{figure}[htb!]
	\centering
	\includegraphics[width=0.9\textwidth]{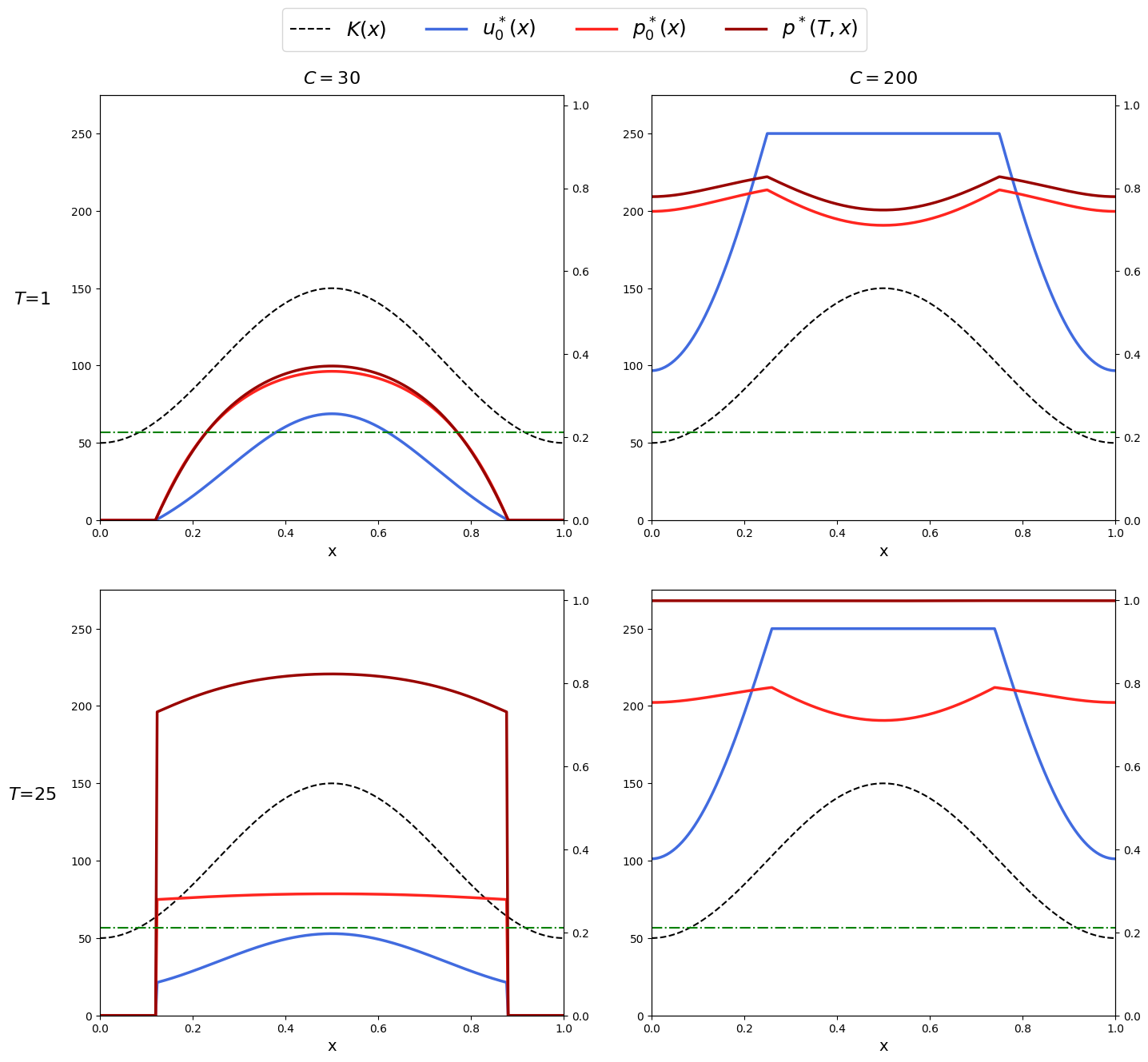}
	\caption{Solutions to problems \eqref{prob:u0}, $u_0^*(x)$, and \eqref{prob:p0}, $p_0^*(x)$, with a continuous carrying capacity $K(x):=K_S(x)$ as defined in \eqref{K_simulations}, in $\Omega=[0,1]$, for different time horizons and amounts of available mosquitoes. $p^*(T,x)$ stands for the solution of equation \eqref{eq:psimpl} with initial data $p_0^*(x)$. The values of $K(x)$ and $u_0^*(x)$ are read on the left axis, the values of $p_0^*(x)$ and $p^*(T,x)$ are read on the right axis. In green, the threshold $p=\theta$.}
	\label{fig:K_sin}
\end{figure}

In Figure \ref{fig:K_patch} we show the results for the simulations with $K(\cdot)=K_P(\cdot)$. This figure represents a scenario with two patches of land with two very distinct conditions for mosquitoes, as it can be the case, for example, of an urban area close to a wetland. In the case $C=30$ we can observe again the difference between the short-term and long-term strategies. With $T=1$ reaching a higher proportion on the left patch is favored, leaving the second patch untreated. Meanwhile, when the time horizon is increased, it also increases the incentive to release in a wider area above the critical proportion $p_0(x)=\theta$. Therefore the optimal releasing strategy consists in releasing a slightly smaller amount on the left patch in order to release in a certain domain in the right one.

In this case, on the bottom-left graph, we are in the case where $|\tilde{\Omega}_{\lambda_0}|>0$, and thus the secondary problem must be solved for the values of $\lambda$ in $[\lambda_0,\lambda_1]$ (see \eqref{prob:lambda}). Since $K(\cdot)$ is simple, the amount of mosquitoes released and the size of each subdomain of the right patch can be determined almost explicitly. For a fixed value of $\lambda\in[\lambda_0,\lambda_1]$, the amount of mosquitoes released in the left patch is $$\int_0^{\frac{|\Omega|}{2}}K(x)G\lrp{p_0^*(x)}\,dx=\int_0^{\frac{|\Omega|}{2}}\frac{3K_0}{2}G\lrp{\psi^1_{x,T}\lrp{\frac{2\lambda}{3K_0}}}\,dx=\frac{3|\Omega|K_0}{4}G\lrp{\psi^1_{x,T}\lrp{\frac{2\lambda}{3K_0}}},$$ analogously in the right patch $p_0^*(x)=\psi^1_{x,T}\lrp{\frac{2\lambda}{K_0}}$ wherever it is not $0$. Therefore the size of the subdomain $D$,  where mosquitoes are released in the right patch is such that $$C=\frac{3|\Omega|K_0}{4}G\lrp{\psi^1_{x,T}\lrp{\frac{2\lambda}{3K_0}}}+\frac{|D|K_0}{2}G\lrp{\psi^1_{x,T}\lrp{\frac{2\lambda}{K_0}}}.$$ This equality can be satisfied for different values of $\lambda$ and $|D|$. To find the optimal value $\lambda$ and thus the optimal value of $|D|$, we solve the one-dimensional optimization problem \eqref{prob:lambda}, which in this case reads
\begin{eqnarray*}
&& \inf_{\lambda\in[\lambda_0,\lambda_1]} \frac{3|\Omega|K_0^2}{16}((1-p^l(T))^2)+\frac{|D|K_0^2}{4}((1-p^r(T))^2) \\
&& =  \inf_{\lambda\in[\lambda_0,\lambda_1]} \frac{3|\Omega|}{4}((1-p^l(T))^2)+|D|((1-p^r(T))^2 \\
&& = \inf_{\lambda\in[\lambda_0,\lambda_1]} \frac{3|\Omega|}{4}((1-p^l(T))^2)+\frac{1}{G(p_0^r)}\lrp{\frac{2C}{K_0}-\frac{3|\Omega|}{2}G(p_0^l)}((1-p^r(T))^2
\end{eqnarray*}
where $p^l(T)$ and $p^r(T)$ solve equation $p'(t)=f(p(t))$ with initial condition $p_0^l=\psi^1_{x,T}\lrp{\frac{2\lambda}{3K_0}}$ and $p_0^r=\psi^1_{x,T}\lrp{\frac{2\lambda}{K_0}}$ respectively. This solution, nonetheless, is what we have been calling unique `up to a rearrangement'. As long as the size of the domain where mosquitoes are released is preserved, the solution can be moved on the right half of the domain and still be optimal. In Figure \ref{fig:K_patch_reverse} we show another choice for the solution.

Once again, in the case $C=200$ (right column) the solution does not change significantly when $T$ is increased. We can see how the monotonicity of $u_0^*(\cdot)$ with respect to $K(\cdot)$ is respected, but not for $p_0^*(\cdot)$. In this case, releasing a smaller amount of mosquitoes in the right patch induces a higher initial proportion due to the smaller carrying capacity there.

\begin{figure}[htb!]
	\centering
	\includegraphics[width=0.9\textwidth]{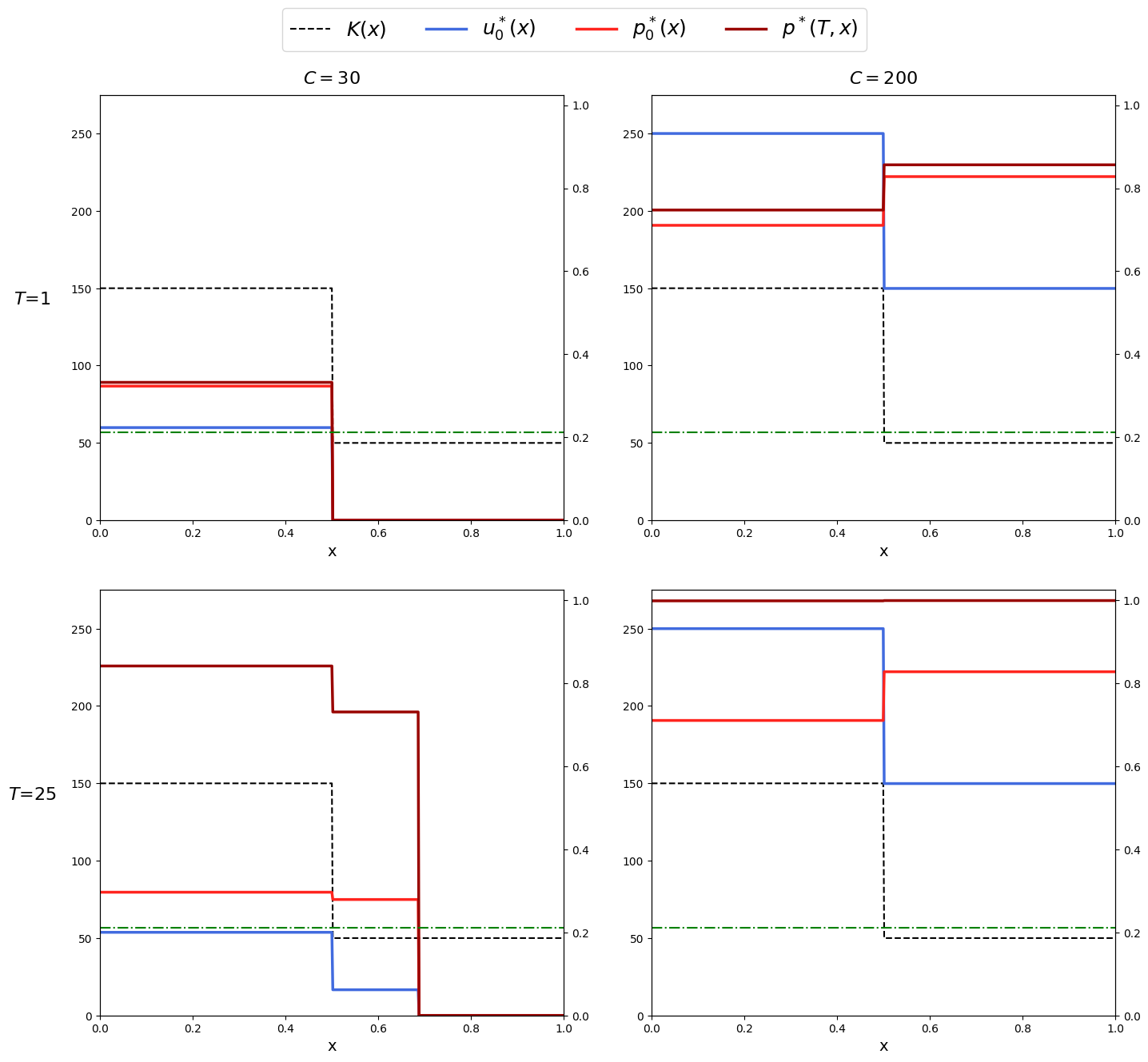}
	\caption{Solutions to problems \eqref{prob:u0}, $u_0^*(x)$, and \eqref{prob:p0}, $p_0^*(x)$, with a piece-wise constant carrying capacity $K(x):=K_P(x)$ as defined in \eqref{K_simulations}, in $\Omega=[0,1]$, for different time horizons and amounts of available mosquitoes. $p^*(T,x)$ stands for the solution of equation \eqref{eq:psimpl} with initial data $p_0^*(x)$. The values of $K(x)$ and $u_0^*(x)$ are read on the left axis, the values of $p_0^*(x)$ and $p^*(T,x)$ are read on the right axis. In green, the threshold $\theta$.}
	\label{fig:K_patch}
\end{figure}

\begin{figure}[htb!]
	\centering
	\includegraphics[width=0.57\textwidth]{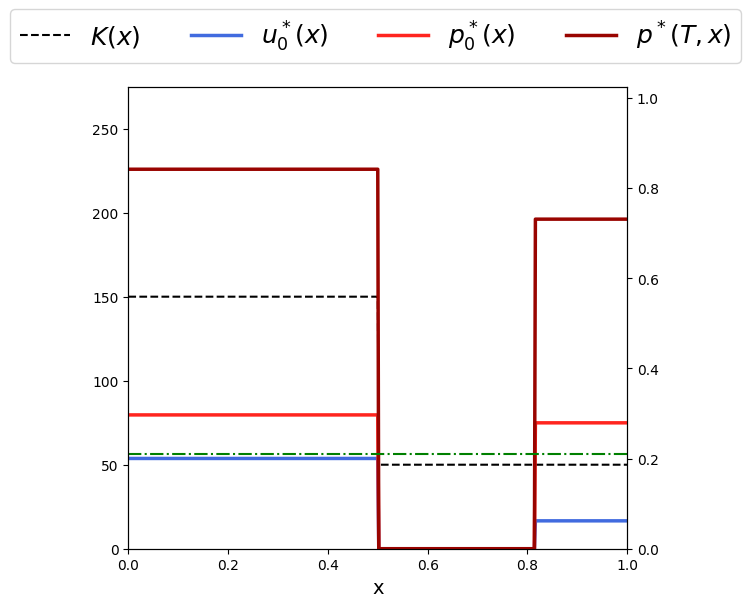}
	\caption{Alternative arrangement of solutions to problems \eqref{prob:u0}, $u_0^*(x)$, and \eqref{prob:p0}, $p_0^*(x)$, with a piece-wise constant carrying capacity $K(x):=K_P(x)$ as defined in \eqref{K_simulations}, in $\Omega=[0,1]$, with $C=30$ and $T=25$. $p^*(T,x)$ stands for the solution of equation \eqref{eq:psimpl} with initial data $p_0^*(x)$. The values of $K(x)$ and $u_0^*(x)$ are read on the left axis, the values of $p_0^*(x)$ and $p^*(T,x)$ are read on the right axis. In green, the threshold $\theta$.}
	\label{fig:K_patch_reverse}
\end{figure}

\subsection{2D simulations}

The method developed in this work can be applied in any dimension. Problems \eqref{prob:u0} and \eqref{prob:p0} can be solved using Theorems \ref{theo:TleqT0} and \ref{theo:TgeqT0} or, at least, reduced to the one-dimensional optimization problem \eqref{prob:lambda}, independently of the number of spatial dimensions considered. For a real application, nonetheless, the most interesting case is 2D.

Despite not presenting any novelties conceptually speaking, to illustrate the potential of the results, we show also two more simulations done in a 2D setting. We considered the following carrying capacity 
\begin{equation}\label{K_2D_simulations}
K_{2D}(x,y)=K_0\lrp{1-\frac{1}{6}\cos\lrp{\frac{2\pi x}{L_x}}-\frac{1}{3}\cos\lrp{\frac{2\pi y}{L_y}}}.
\end{equation}
As in the case with $K_S$, $K_{2D}$ models a scenario with a higher concentration of mosquitoes towards the center of the domain and a smaller one towards the boundaries. Nevertheless, note that $K_{2D}$ is not radially symmetric.

For the simulations we took $\Omega=[0,L_x]\times[0,L_y]$, with $L_x=L_y=1$. Once again, $\int_{\Omega}K_{2D}(x,y)\,dx\,dy=K_0|\Omega|$ for the chosen parameters. The results of the simulations can be seen in Figure \ref{fig:2D_K_sin}. We portray only the case $T=25$ for two values of $C$. Results match the intuition one can have from the related 1D case $K(\cdot)=K_{S}(\cdot)$. When a small amount of mosquitoes is considered, $C=30$ the solution is flat and wide to surpass the critical proportion $p_0=\theta$ in a bigger area, since the proportion of \textit{Wolbachia}-infected mosquitoes will naturally increase in those places. Also, $u_0^*(x)=0$ outside of this area for the reasons already exposed. On the other hand when a bigger amount of mosquitoes is considered, $C=200$, the solution is bigger than $p_0=\theta$ everywhere and varies more rapidly, being higher where the carrying capacity is higher, but flattening out when $u_0^*=M$ is reached.
 
\begin{figure}[htb!]
	\centering
	\includegraphics[width=0.9\textwidth]{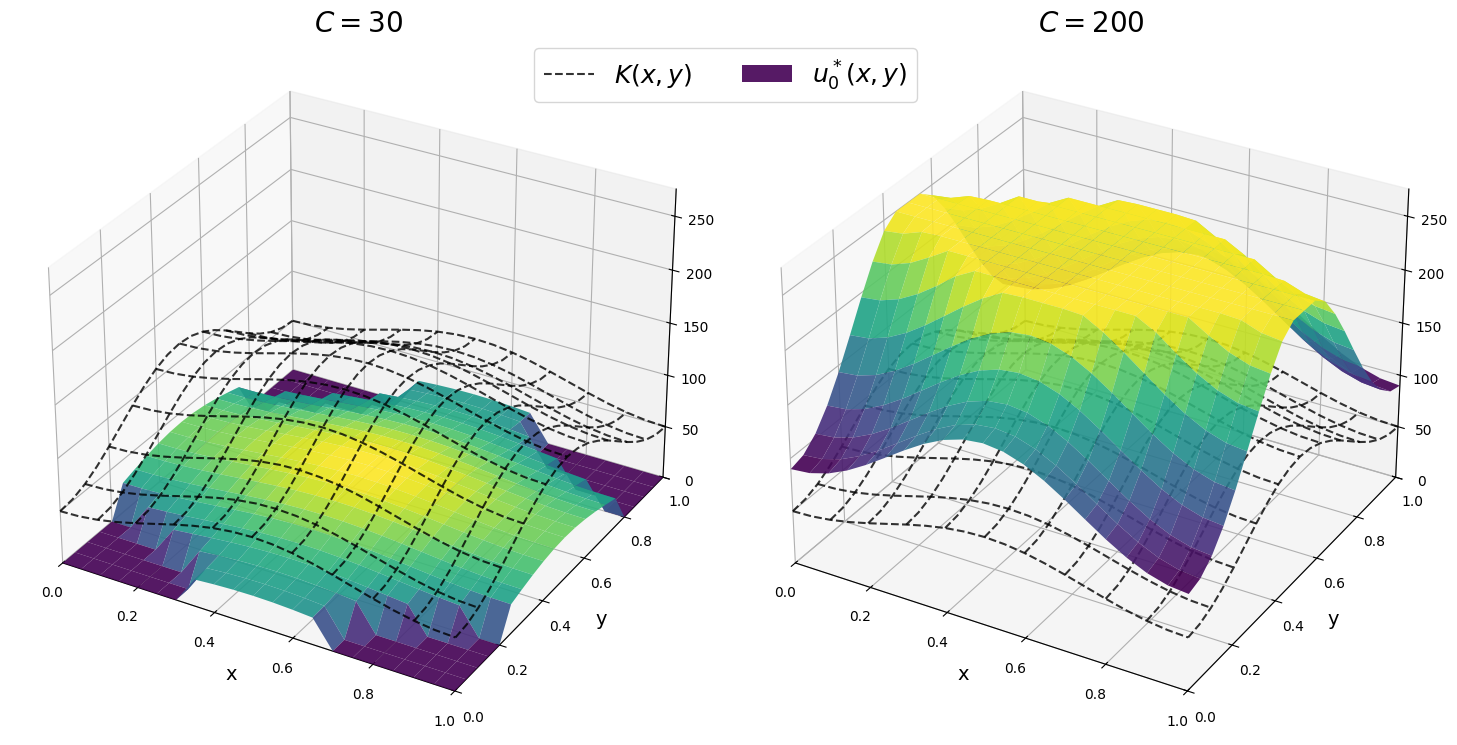}
	\caption{Solutions to Problem \eqref{prob:u0}, $u_0^*(x,y)$, in a two dimensional setting, with carrying capacity $K(x,y):=K_{2D}(x,y)$ as defined in \eqref{K_2D_simulations}, in $\Omega=[0,1]^2$, for different amounts of available mosquitoes and a time horizon of $T=25$.}
	\label{fig:2D_K_sin}
\end{figure}

\section{Discussion}\label{sec:Discussion}

In this paper, we examine the influence of spatial heterogeneity on an optimal release of \textit{Wolbachia}-infected mosquitoes in the context of the population replacement technique.

The case without considering spatial variability has been extensively studied in \cite{APSV}. This paper generalizes those results by incorporating a spatial component, allowing the carrying capacity to vary across the domain, which is typically the case in natural environments. Additionally, we introduce a global constraint across the entire domain. Despite its limitations, our study is significant because, to the best of our knowledge, spatial inhomogeneity has not been adequately addressed in the literature, even though it plays a crucial role in shaping optimal mosquito releases.

Under hypothesis \eqref{H} on the biological parameters of the problem, we explicitly identify a threshold in the time horizon of the problem, $T_0$. For time horizons shorter than this threshold, we prove the existence and characterize optimal release profiles. A key implication of this first result is that the solution to Problem \eqref{prob:u0}, $u_0^*$, is non-decreasing where $K$ is increasing. This has a direct translation for practical purposes in the field: under the aforementioned circumstances and for short time horizons, releases should be more intense in areas with higher initial mosquito populations.

On the other hand, for larger time horizons, we either directly characterize the optimal solution or reduce \eqref{prob:u0} to a one-dimensional optimization problem that can be easily solved numerically. This reduction can be achieved regardless of the number of dimensions considered for the releases (typically 1D or 2D in practical scenarios). This result significantly simplifies the numerical construction of optimal release profiles for large time horizons.

We remark that although we do not establish existence of solutions for Problem \eqref{prob:u0} for a general carrying capacity function $K$, we were able to prove existence of a minimizer for specific forms of the function $K$~: We demonstrated this for constant $K$ (see Corollary \ref{coro:solKconstant}) and for piecewise constant $K$ in one dimension (see Appendix \ref{app:Existence}). The case where $K$ is piecewise constant is particularly relevant as, in practice, $K$ is estimated from field measurements taken at a finite number of points in space.

While our results are valid only under hypothesis \eqref{H}, we show numerically (see Appendix \ref{app:H}) that a significant portion of the parameter space satisfies this condition.

The primary limitation of this paper lies in the simplifying assumption of negligible mosquito mobility. While assuming that mosquitoes do not disperse simplifies the analysis, this comes at the expense of losing realism in the modeling. However, this simplification allows us to study an ordinary differential equation, Problem \eqref{prob:u0}, instead of a much harder to tackle partial differential equation, Problem \eqref{prob:full}. \textit{Aedes} mosquitoes are known to have limited flight capabilities, with some mark-release-recapture experiments showing that 90\% of recaptured mosquitoes remain within a 200m radius of the release point \cite{migration_rate}. Therefore, if mosquito mobility is sufficiently low, the results presented here can provide meaningful insights into real-world scenarios, as long as solutions of Problem \eqref{prob:reduced} converge to those of Problem \eqref{prob:u0} as $D\to 0$.

Comparing solutions to Problem \eqref{prob:reduced} for small values of $D$, with solutions to Problem \eqref{prob:u0} for the case $D=0$ (see Figure \ref{fig:Limit_Diff_T1}) we observe that as $D$ decreases, solutions appear to converge. Solutions for $D>0$ have been obtained using the software GEKKO (see \cite{GEKKO}), while solutions for $D=0$ have been obtained  using our ad-hoc algorithm developed exploiting theorems \ref{theo:TleqT0} and \ref{theo:TgeqT0}. At $D=5\times 10^{-5}$, the solution closely resembles the $D=0$ case, with minor perturbations only at the domain edges and transition points. The formal analysis of the problem is left for future works.

\begin{figure}[htb!]
	\centering
	\includegraphics[width=\textwidth]{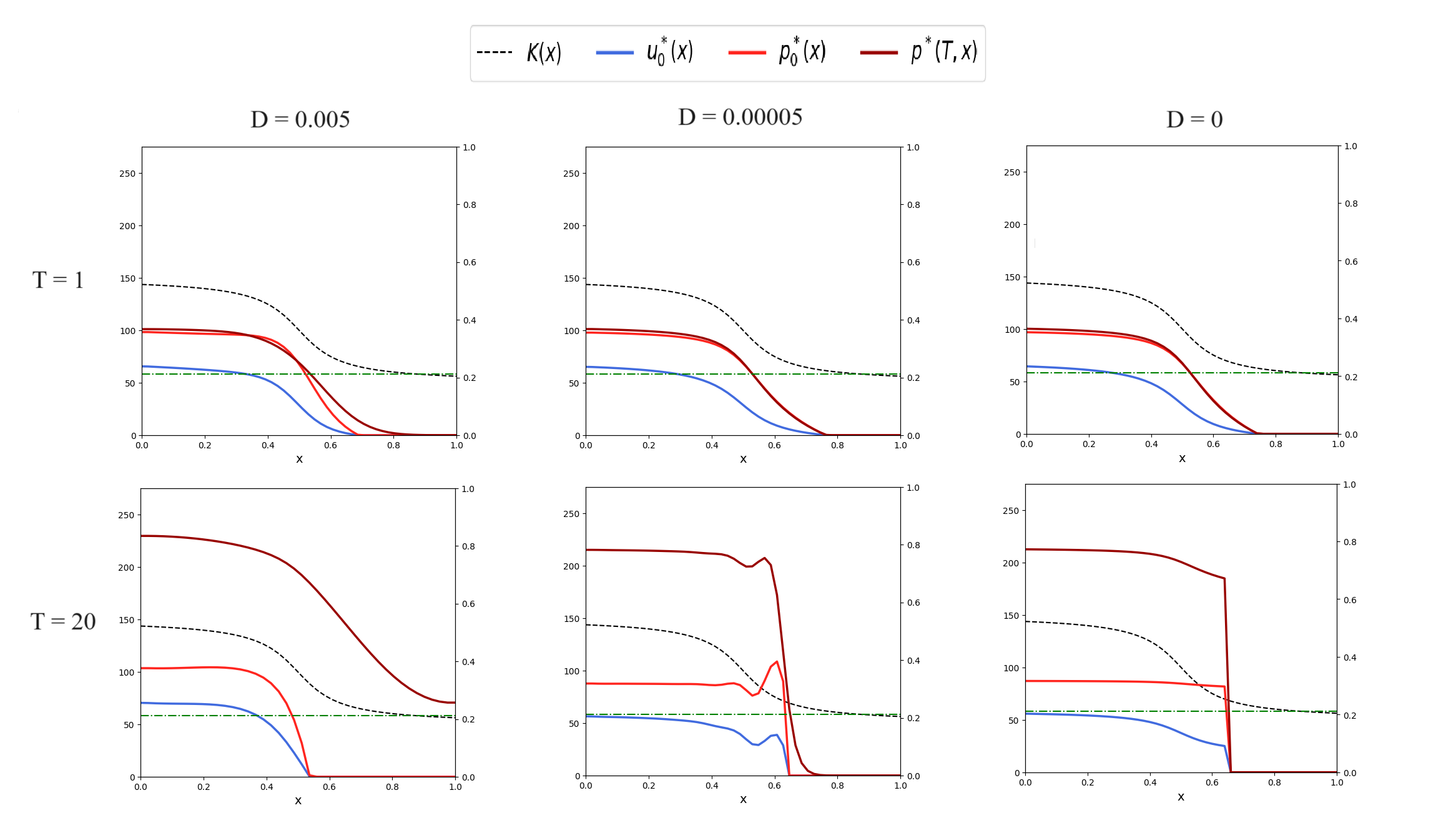}
	\caption{Solutions to problems \eqref{prob:u0}, $u_0^*$, and \eqref{prob:p0}, $p_0^*$, with $K(x)=K_0\lrb{1+\frac{1}{\pi}\arctan\lrp{-10\lrp{x-\frac{|\Omega|}{2}}}}$ modelling an uneven distribution of mosquitoes with a smooth transition, for different time horizons ($T\leq T_0$ and $T > T_0$), $C=30$ and a decreasing diffusion rate from left to right. Absolute amounts must be read in the left axis, while proportions must be read in the right axis. $p^*$ stands for the solution of equations \eqref{eq:p}, cases $D>0$, and \eqref{eq:psimpl}, case $D=0$, with initial data $p_0^*(\cdot)$. In green, the threshold $p=\theta$.}
	\label{fig:Limit_Diff_T1}
\end{figure}

In conclusion, numerical studies suggest that, at least in certain scenarios, solutions converge to the ones we obtained as the diffusion rate approaches zero. This indicates that despite the simplifications made, if mosquito mobility is sufficiently low —an observation supported by certain field releases \cite{Hoffmannetal, Deployment_Townsville}— our results can approximate the optimal distribution for a single release at initial time of \textit{Wolbachia}-infected mosquitoes.

\section*{Acknowledgement}

The authors would like to aknowledge warmly Pr. Yannick Privat for fruitful discussions and useful comments and remarks. 
\vspace{0.2cm} \\
\begin{minipage}{2cm}
\includegraphics[width=\textwidth]{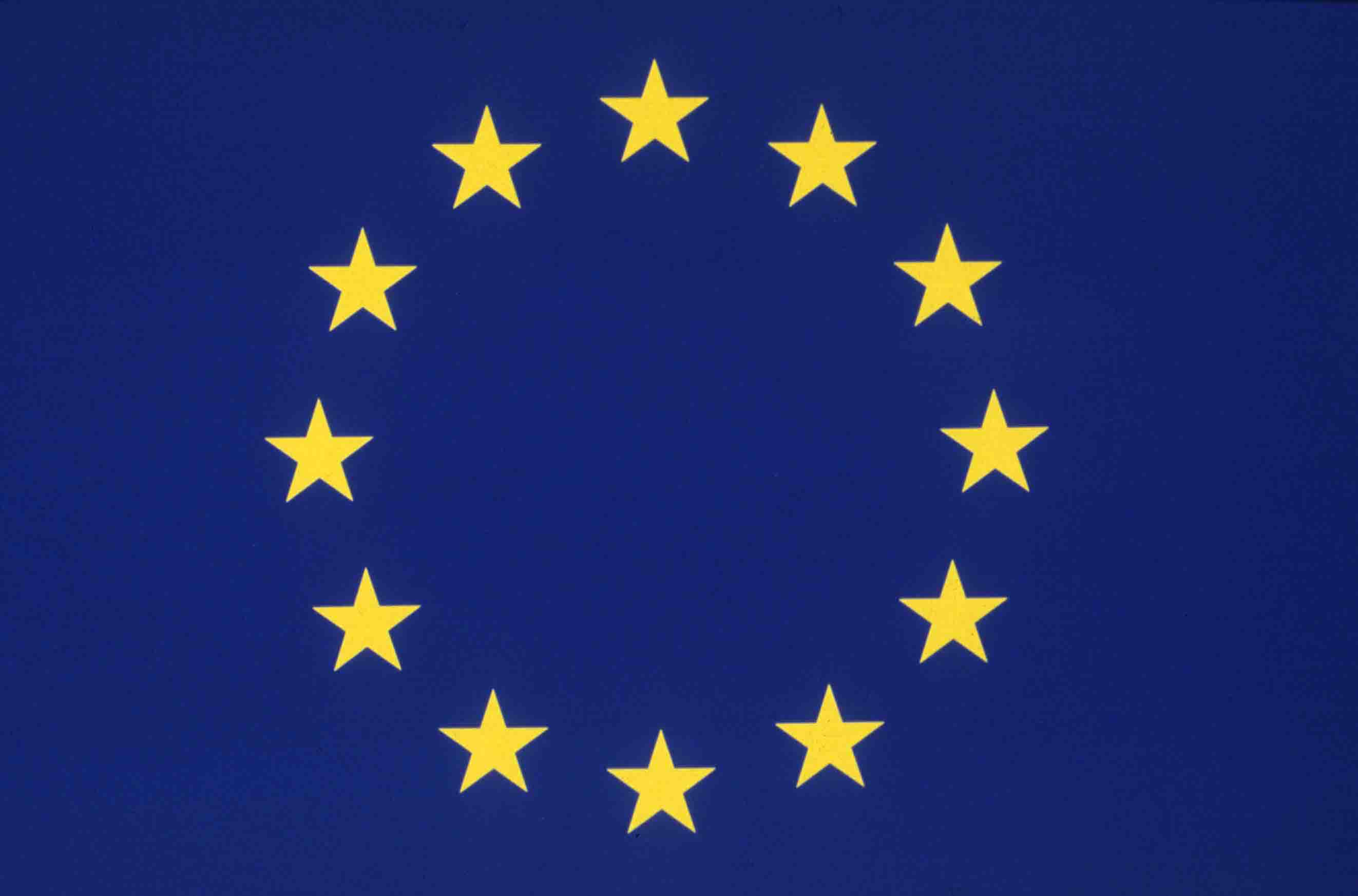}
\end{minipage}
\hspace{0.2cm}
\begin{minipage}{14.25cm}
	 This project has received funding from the European Union’s Horizon 2020 research and innovation programme under the Marie Skłodowska-Curie grant agreement No 754362.
\end{minipage}\ \\

\hspace{0.5cm}
\appendix
\begin{center}
	\fbox{\Large{\textsf{Appendix}}}
\end{center}

\appendix

\section{Numerical exploration of the parameter space for Hypothesis \eqref{H}}\label{app:H}

\begin{proposition}\label{prop:f2zero}
	Let us consider $$f(p) := b_1^0 d_2 s_h \frac{p(1-p)(p-\theta)}{b_1^0 (1-p) (1-s_h p)+ b_2^0 p}\quad \text{and}\quad\theta := \frac{1}{s_h}\lrp{1-\frac{d_1 b_2^0}{d_2 b_1^0}}.$$ Assuming $d_1\leq d_2 \leq b_2^0 \leq b_1^0$ and $0<\theta<1$, then $f''$ admits a single zero in $(0,1)$.
\end{proposition}

\begin{proof}
	The existence of a zero of $f''$ is straightforward to prove: we have $f(0)=f(\theta)=f(1)=0$, thus thanks to Rolle's theorem, there exist two zeros of $f^{\prime}$ in $(0,1)$. Then, applying again Rolle's theorem, we prove the existence of some zero $\theta_2$ of $f^{\prime\prime}$, lying in between the two zeros of $f^{\prime}$. Thus, $\theta_{2}\in(0,1)$ in all generality.
	
	Let us now prove the uniqueness of the zero of $f^{\prime\prime}$ in $[0,1]$. By computing the rational function $f^{\prime\prime}$,
	we see that $\{f^{\prime\prime}=0\}=\{R=0\}$ where, denoting $\kappa:=1+s_{h}-\frac{b_{2}^0}{b_{1}^0}>0$,
	\begin{align*}
	R(p) & :=(s_{h}-s_{h}^{2}\theta-\kappa^{2}+\kappa s_{h}+\kappa s_{h}\theta)p^{3}+3(\kappa-s_{h}-s_{h}\theta)p^{2}-3(1-s_{h}\theta)p+\theta-\kappa\theta+1\\
	& =\bar{A}p^{3}+\bar{B}p^{2}+\bar{C}p+\bar{D}
	\end{align*}
	Thus to conclude, it is enough to prove that $R$ has a unique zero
	in $[0,1]$.
	Because $d_1\leq d_2\leq b_2^0\leq b_1^0$ holds, we find that 
	\[
	\bar{A}>0,\qquad \bar{B}\leq 0,\qquad \bar{C}<0,\qquad \bar{D}>0.
	\]
	Then, using Descarte's rule of signs, we find that $R$ has zero or
	two positive roots. However, since $f^{\prime\prime}$ has at least
	one zero in $(0,1)$, so does $R$, so that $R$ admits exactly two
	positive roots. Meanwhile, applying the rule to $p\mapsto R(-p)$
	implies that $R$ has one negative root. 
	
	Now, we set $S(p)=R(p+1)$. In particular, the leading coefficient
	of $S$ is $\bar{A}>0$ while one can prove that
	\[
	S(0)=\bar{A}+\bar{B}+\bar{C}+\bar{D}<0.
	\]
	Clearly $S$ has three real roots, and their product is given by $-\frac{S(0)}{\bar{A}}>0$.
	However, $S$ has at least one negative root since $R$ also does. Since
	the product of all three roots of $S$ is positive, $S$ has exactly
	two negative roots and one positive root. As a result, $R$ has exactly
	one root in $[0,1]$, and the conclusion follows.
\end{proof}

This section is devoted to a numerical exploration of the space of parameters, in order to establish the validity of Hypothesis \eqref{H}. For the exploration, we normalize the parameters assuming $b_1^0=1$. The results are presented in Figures \ref{fig:HA_4x4} and \ref{fig:HA_2x2}. To produce these images for a given pair of values $(s_h,b_2)$, two random values for $d_2$ and $d_1$ are chosen, such that $d_1\leq d_2\leq b_2^0\leq b_1^0=1$. If the randomly generated set of parameters are such that $\theta\not\in(0,1)$, the set is discarded. If indeed $0<\theta<1$, then the Hypothesis \eqref{H} is tested. In blue, are the values of the parameters for which Hypothesis \eqref{H} is satisfied. In red, the values for which is not.
\begin{figure}[h!]
	\centering
	\includegraphics[width=\textwidth]{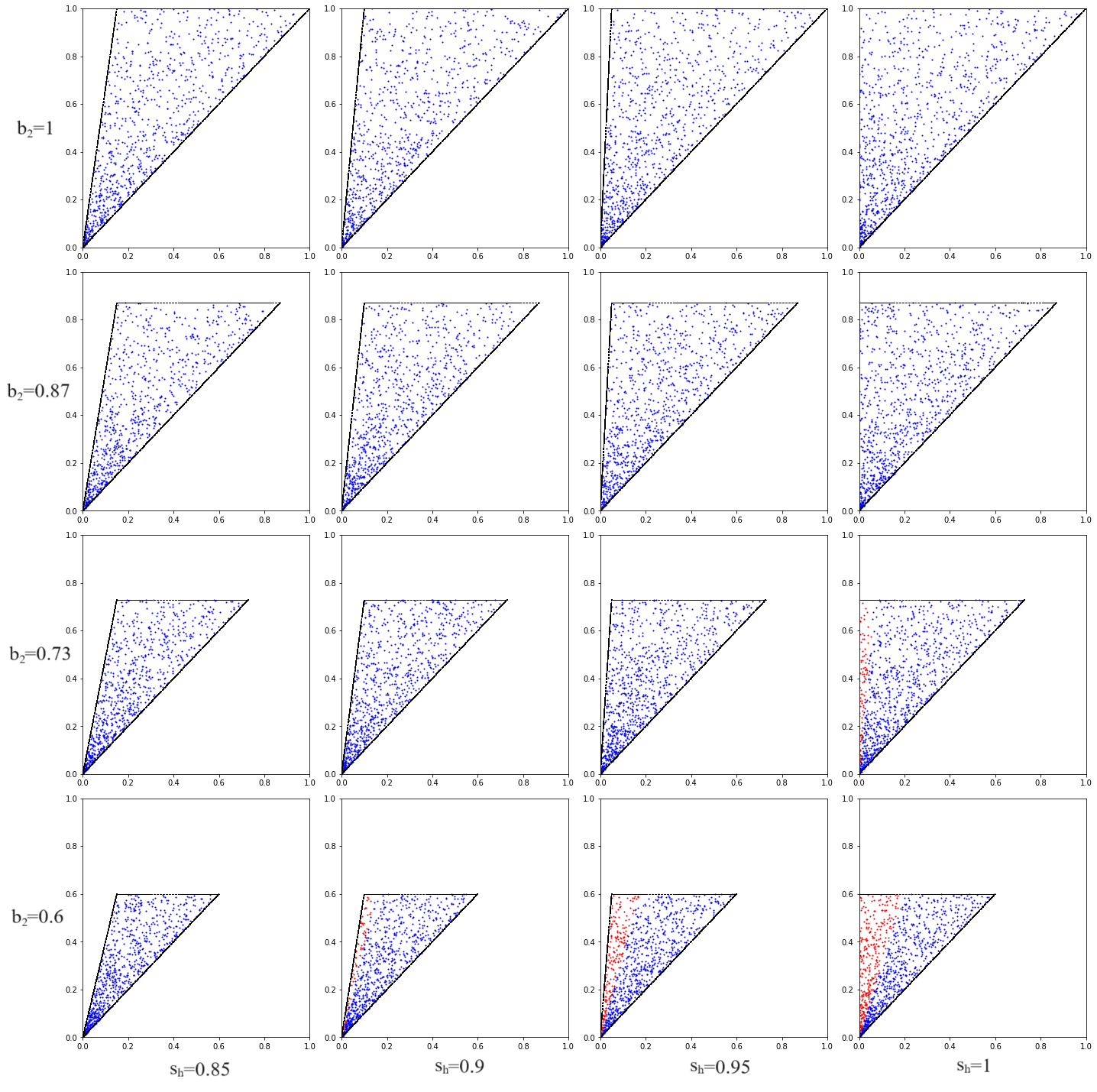}
	\caption{Hypothesis \eqref{H} tested for different parameter values. All pictures have $d_1$ in the $x$-axis and $d_2$ in the $y$-axis. In this image, $s_h\in\left\{0.5,0.67,0.83,1\right\}$ increases from left to right and $b_2\in\left\{0.6,0.73,0.87,1\right\}$ decreases from top to bottom. Blue dots mean Hypothesis \eqref{H} is satisfied for those parameters, while red dots mean it is not.}
	\label{fig:HA_4x4}
\end{figure}
\begin{figure}[h!]
	\centering
	\includegraphics[width=0.6\textwidth]{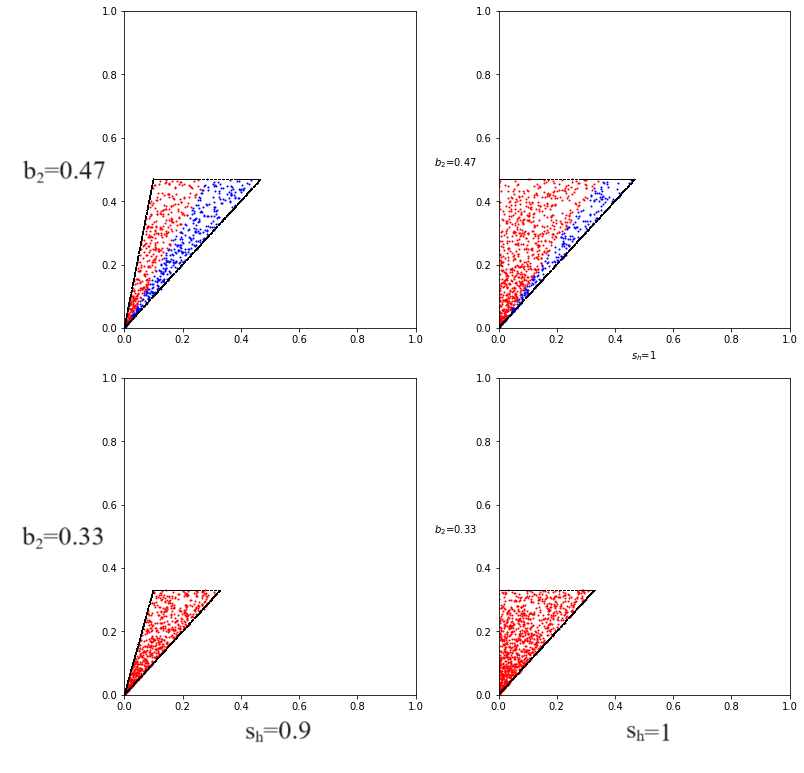}
	\caption{Hypothesis \eqref{H} tested for small values of $b_2$ and high values of $s_h$. All pictures have $d_1$ in the $x$-axis and $d_2$ in the $y$-axis. In this image, $s_h\in\left\{0.9,1\right\}$ increases from left to right and $b_2^0\in\left\{0.33,0.47\right\}$ decreases from top to bottom. Blue dots mean Hypothesis \eqref{H} is satisfied for those parameters, while red dots mean it is not.}
	\label{fig:HA_2x2}
\end{figure}

 The condition $\theta\in(0,1)$ can be written in terms of the other parameters,$$ 0<\theta<1 \Leftrightarrow 0<1-\frac{d_1b_2^0}{d_2b_1^0}<s_h \Leftrightarrow 1>\frac{d_1b_2^0}{d_2b_1^0}>1-s_h \Leftrightarrow \frac{d_2}{d_1}>\frac{b_2^0}{b_1^0}>\frac{d_2}{d_1}(1-s_h).$$ Since $b_2^0\leq b_1^0$ and $d_1\leq d_2$, this means that if $b_2^0\leq 1-s_h$, necessarily $\theta>1$, thus these values can be excluded from the exploration. Indeed, the black lines wrapping the dots in Figures \ref{fig:HA_4x4} and \ref{fig:HA_2x2} are $d_2=d_1$, $b_2^0=1$ and $b_2^0=1-s_h$.

 As we can see in Figures \ref{fig:HA_4x4} and \ref{fig:HA_2x2}, Hypothesis \eqref{H} is satisfied by most of the parameters of the parameter space. For high values of $b_2^0$ it is always satisfied (also for those values not shown in Figure \ref{fig:HA_4x4}). As $b_2^0$ decreases, red dots appear only when we have high values of $s_h$ and $d_2\gg d_1$. Only for high values of $s_h$ and small values of $b_2^0$ the red dots dominate the picture. In a realistic scenario, based on the values for the parameters found in the literature \cite{APSV,DHP} we expect a high value of $s_h$ and $b_2^0$ and smaller values of $d_2$ and $d_1$, for which hypothesis \eqref{H} is satisfied. The values of the parameters not satisfying Hypothesis \eqref{H} would represent a particular strain of \textit{Wolbachia} that, in a certain variety of mosquito, would produce a high CI rate and a big penalty on their fertility, which, a priori, is not impossible.

\section{Comments on the existence of solutions for Problem \eqref{prob:p0}}\label{app:Existence}

In this work, we do not establish the existence of minimizers for Problem \eqref{prob:p0} in all generality. Despite not exploring the matter separately, we do prove the existence of solutions in the case $T\leq T_0$ in Theorem \ref{theo:TleqT0}, under hypothesis \eqref{H}. Indeed, in this theorem, we provide necessary conditions that solutions must satisfy. Then, in its proof, we exploit these conditions to narrow down the solution space, obtaining a single candidate to solution, $p_0^*$. Since the constructed $p_0^*$ is the only candidate solution and we prove that $p_0^*\in\mathcal{P}_{0,C,M}$, we conclude not only the existence but also the uniqueness of the solution (as already mentioned, up to a rearrangement in case $K(\cdot)$ is piecewise constant). Conversely, for the case $T>T_0$, always under hypothesis \eqref{H}, the candidate solution we construct in the proof of Theorem \ref{theo:TgeqT0} by exploiting the necessary optimality conditions is not unique, but rather a whole family of candidates depending on a parameter $\lambda^*\in[\lambda_0,\lambda_1]$. Hence we cannot conclude the existence of solutions in general from this result.

Although tackling comprehensively the existence of minimizers is beyond the scope of this work, we present here a partial result settling the existence in 1D, for all $T>0$, for the case where $K(\cdot)$ is piecewise constant. This proof, nevertheless, cannot be easily extended to the general case.

\begin{proposition}
	Let $K$ be a piece-wise constant function, with $x\in \Omega\subset\mathbb{R}$. Then, there exists $p_0^*\in\mathcal{P}_{0,C,M}$ solving Problem \eqref{prob:p0}.
\end{proposition}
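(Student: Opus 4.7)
The plan is to exploit the fact that since $K$ is piecewise constant and there is no spatial coupling in \eqref{eq:psimpl}, on each maximal piece where $K$ is constant both the cost functional and the integral constraint are invariant under measure-preserving rearrangements of $p_0$. This is used to upgrade an arbitrary minimizing sequence to one consisting of monotone functions on each piece, for which Helly's selection theorem provides the compactness that convexity would otherwise have had to supply.

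First I would decompose $\Omega = \bigsqcup_{i=1}^N \Omega_i$ into the maximal intervals on which $K \equiv K_i$. Because $D=0$, equation \eqref{eq:psimpl} is a pointwise ODE, so $p(T,x) = \Phi_T(p_0(x))$, where $\Phi_T$ denotes the time-$T$ flow of $\dot p = f(p)$ and is locally Lipschitz in its argument. Hence on each $\Omega_i$ the cost contribution $K_i^2 \int_{\Omega_i}(1-\Phi_T(p_0))^2\,dx$ and the constraint contribution $K_i \int_{\Omega_i} G(p_0)\,dx$ depend only on the distribution function of $p_0|_{\Omega_i}$, and the pointwise bound $0 \le p_0 \le G^{-1}(M/K_i)$ on $\Omega_i$ is also preserved under any measure-preserving rearrangement.

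Given a minimizing sequence $(p_0^n) \subset \mathcal{P}_{0,C,M}$, I would replace $p_0^n|_{\Omega_i}$ by its (unique) nondecreasing rearrangement on each $\Omega_i$. This leaves both the cost $J^0(p_0^n)$ and the integral constraint value untouched, so the new sequence $(\tilde p_0^n)$ still lies in $\mathcal{P}_{0,C,M}$ and is still minimizing, but its restriction to each $\Omega_i$ is now nondecreasing and uniformly bounded by $G^{-1}(M/K_i)$. Helly's selection theorem on each $\Omega_i$, together with a finite diagonal extraction (over the $N$ pieces), yields a subsequence converging pointwise a.e.\ on $\Omega$ to a function $p_0^*$ which is nondecreasing on each $\Omega_i$ and satisfies the same pointwise $L^\infty$ bounds. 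Dominated convergence, using the continuity of $G$ on $[0,1]$, lets the integral constraint pass to the limit, so $p_0^* \in \mathcal{P}_{0,C,M}$; continuous dependence of $\Phi_T$ on its initial data together with a second application of dominated convergence gives $J^0(p_0^*) = \lim_n J^0(\tilde p_0^n) = \inf J^0$, so $p_0^*$ is a minimizer.

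The main obstacle is the nonconvexity of the cost, which rules out the classical weak lower semicontinuity argument that one would first try. The piecewise-constant structure of $K$ is exactly what allows us to bypass this, because it restores full rearrangement invariance on each piece; if $K$ varied continuously, the cost would couple $p_0(x)$ with the local value of $K(x)$ (the optimality analysis of Section \ref{sec:TgeqT0} already shows that in the regime $T>T_0$ optimal $p_0^*$ need not be monotone with respect to $K$), and no such rearrangement trick would apply. The restriction to one spatial dimension is used to invoke Helly's selection theorem in its standard form on each interval $\Omega_i$; extending the argument to higher dimensions would require a more delicate compactness result for monotone rearrangements on $\mathbb{R}^d$.
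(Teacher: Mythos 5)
Your proposal is correct and follows essentially the same route as the paper's proof: exploit rearrangement invariance of both the cost and the constraint on each interval where $K$ is constant (using that $p(T,x)$ is a continuous function of $p_0(x)$ only), reduce to a minimizing sequence that is monotone on each piece, and conclude by Helly's selection theorem plus dominated convergence. The only cosmetic difference is that you rearrange the minimizing sequence directly, whereas the paper first shows the infimum over $\mathcal{P}_{0,C,M}$ equals the infimum over the piecewise-monotone subclass; the substance is identical.
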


\begin{proof}
	Let us write $K(x)=\sum_{i=1}^n K_i \mathbbm{1}_{[x_{i-1},x_i]}$. We place ourselves in one of the intervals $[x_{i-1},x_i]$, where $K(\cdot)$ is constant.
	
	Let us consider any function $p_0\in\mathcal{P}_{0,C,M}$ in this interval. We claim there exists a monotonic (decreasing or increasing) rearrangement of $p_0$, that we will denote $\widehat{p}_0$ such that $\widehat{p}_0\in\mathcal{P}_{0,C,M}$. To define this rearrangement, let us introduce, in a given interval $[x_{i-1},x_i]$, $$\mu(s):=\left|\{x\in[x_{i-1},x_{i}] \ : \  p_0(x)>s\}\right|.$$ Then, we define $\widehat{p}_0$ in that interval as $$\widehat{p}_0(x):=\inf\{s\in[0,1] \ : \  \mu(s)\leq x\}.$$ The fact that such a rearrangement will respect $0\leq \widehat{p}_0\leq G^{-1}\lrp{M/K(x)}$ is trivial since rearranging a function does not change its maximums or minimums (see \cite{Rearrangement}). Suppose $\int_\Omega K(x)G(p_0(x))\,dx\leq C$, then $$\int_\Omega K(x)G(\widehat{p}_0(x))\,dx=\int_{\Omega\setminus [x_{i-1},x_i]} K(x)G(p_0(x))\,dx+K_i\int_{x_{i-1}}^{x_i}G(\widehat{p}_0(x))\,dx$$
	for every $i\in \{ 1,\dots,n\}$.
	$G$ is a continuous function, hence it is measurable. Therefore, since $G$ is non-negative and measurable we have $$\int_{x_{i-1}}^{x_i}G(p_0(x))\,dx=\int_{x_{i-1}}^{x_i}G(\widehat{p}_0(x))\,dx$$
	by equimeasurability of the rearrangement.
	
	This implies that if $\int_\Omega K(x)G(p_0(x))\,dx\leq C$, then $\int_\Omega K(x)G(\widehat{p}_0(x))\,dx\leq C$. This reasoning can be easily extended to all of the subintervals. Therefore we have proved that $\mathcal{P}_{0,C,M}$  is stable under rearrangements.
	
	Let us define now $$\widehat{\mathcal{P}}_{0,C,M}:=\left\{\widehat{p}_0\in\mathcal{P}_{0,C,M} \ | \ \widehat{p}_0 \mbox{ is monotonic in } [x_{i-1},x_i], i=1,\dots,n  \right\}.$$ Observe that $$\inf_{p_0\in\mathcal{P}_{0,C,M}}J^0(p_0)=\inf_{\widehat{p}_0\in\widehat{\mathcal{P}}_{0,C,M}}J^0(\widehat{p}_0),$$ with $J^0$ defined by \eqref{J0}. Indeed, if $p_0^*$ is minimizer of $J^0$ in $\mathcal{P}_{0,C,M}$, then $$\int_\Omega K(x)^2(1-\widehat{p}(T,x))^2\,dx=\int_{\Omega\setminus [x_{i-1},x_i]} K(x)^2(1-p(T,x))^2\,dx+K_i^2\int_{x_{i-1}}^{x_i}(1-\widehat{p}(T,x))^2\,dx,$$ for every $i\in \{ 1,\dots,n\}$, where we are denoting by $\widehat{p}(t,x)$ the solution to equation \eqref{eq:psimpl} with initial condition $\widehat{p}_0(x)$.
	
	We can follow the same reasoning as before to prove that $$\int_{x_{i-1}}^{x_i}(1-p(T,x))^2\,dx=\int_{x_{i-1}}^{x_i}(1-\widehat{p}(T,x))^2\,dx$$
for every $i\in \{ 1,\dots,n\}$. To see this clearly, we can write $p(t,x)$ as a function of its initial condition by realising that $p(t,x)$ can be written as $$\frac{\partial}{\partial t}p(t,x)=f(p(t,x)) \Rightarrow \int_0^p \frac{d\nu}{f(\nu)}=\int_0^t ds=t.$$
	Defining $F(p)$ as the antiderivative of $1/f(p)$ vanishing at $0$, $F(p):=\int_0^p\frac{d\nu}{f(\nu)}$, we can write
	\begin{equation}\label{expr:pTp0}
	F(p(T,x))=F(p_0(x))+T \Rightarrow p(T,x)=F^{-1}\lrp{F(p_0(x))+T}.
	\end{equation}
	 Both $F$ and its inverse are continuous functions and thus, so it is its composition. Therefore, $p(T,\cdot)$ is also a measurable function of $p_0(x)$ and since it is non-negative both integrals are equal.

	This implies that, if there exists a solution monotonic by intervals, $\widehat{p}_0^{\,*}\in\widehat{\mathcal{P}}_{0,C,M},$ there must also exist a solution in $\mathcal{P}_{0,C,M}$. Thus, we restrict our analysis to the first kind of functions.
	
	Let us consider a minimizing sequence $(\widehat{p}_0^{\,n})_{n\in\mathbbm{N}}\in \widehat{\mathcal{P}}_{0,C,M}$ for Problem \eqref{prob:p0}. We know it exists since $\widehat{\mathcal{P}}_{0,C,M}$ is non-empty. Due to the fact that for all $n\in\mathbbm{N}$, $0\leq \widehat{p}_0^{\,n} (x)\leq G^{-1}(M/K(x))$ a.e. in $\Omega$ and using the monotonicity of $\widehat{p}_0^{\,n}$ on each interval $(x_{i-1},x_i)$, we deduce from Helly's selection theorem (see \cite{Selection_Helly}) that 
	$(\widehat{p}_0^{\,n})_{n\in\mathbbm{N}}$ converges pointwisely to an element $\widehat{p}_0^{\,*}$, up to a subsequence. 

	 Basic properties of pointwise convergence lead us to conclude that $0\leq \widehat{p}_0^{\,*}(x) \leq G^{-1}(M/K(x))$ a.e. in $\Omega$. Moreover, according to the Lebesgue dominated convergence theorem, one has 
	 $$\int_{\Omega}K(x)G\lrp{\widehat{p}_0^{\,*}(x)}\,dx=\lim_{n\to\infty}\int_{\Omega}K(x)G\lrp{\widehat{p}_0^{\,n}(x)}\,dx=\lim_{n\to\infty}\left<K(x)G\lrp{\widehat{p}_0^{\,n}(x)},1\right>_{L^{\infty},L^1}\leq C.$$ 
	 Indeed, we recall that $K(\cdot)$ is piecewise constant and thus it does not affect the convergence properties of the sequence under the integral. Therefore, $\widehat{p}_0^{\,*}\in\widehat{\mathcal{P}}_{0,C,M}$. A similar reasoning shows that the sequence $(F^{-1}\lrp{F(\widehat{p}_0^{\,n}(x))+T})_{n\in \NN}$ converges almost everywhere in $\Omega$ and therefore, we have 
	 $$\lim_{n\to\infty}J^0\lrp{\widehat{p}_0^{\,n}(x)}=J^0\lrp{\widehat{p}_0^{\,*}(x)},$$ 
	 according to \eqref{expr:pTp0}.
	 It follows that $\widehat{p}_0^{\,*}$ is indeed a solution to Problem \eqref{prob:p0}.
\end{proof}

%
%

\bibliographystyle{abbrv}
\bibliography{bib_inhomogeneity}



\end{document}